\newcommand{\Hoo}{\tilde H^{1/2}}    
\newcommand{\R}{\mbox{\rm I\kern-.18em R}}
\newcommand{\fR}{\mbox{\footnotesize\rm I\kern-.18em R}}
\newcommand{\sR}{\mbox{\small\rm I\kern-.18em R}}
\newcommand{\N}{\mbox{\rm I\kern-.18em N}}
\newcommand{\dist}{\mathop{\rm dist}\nolimits}
\newcommand{\<}{\langle}
\renewcommand{\>}{\rangle}
\newcommand{\curl}{\mathop{\rm curl}\nolimits}
\newcommand{\bcurl}{\mathop{\rm{\bf curl}}\nolimits}
\newcommand{\curlS}[1]{\mathop{\rm curl_{\rm{#1}}}\nolimits}
\newcommand{\bcurlS}[1]{\mathop{\rm{\bf curl}_{\rm{#1}}}\nolimits}
\newcommand{\curlT}{\curlS{\mathcal{T}}}
\newcommand{\bcurlT}{\bcurlS{\mathcal{T}}}
\newcommand{\bcurlG}{\bcurlS{\Gamma}}
\newcommand{\CT}{{\cal T}}
\newcommand{\bn}{{\bf n}}
\newcommand{\bt}{{\bf t}}
\newcommand{\bvarphi}{{\mbox{\boldmath $\varphi$}}}
\newcommand{\bpsi}{{\mbox{\boldmath $\psi$}}}
\newcommand{\bH}{{\mbox{\boldmath $H$}}}
\newcommand{\tbH}{{\mbox{\boldmath $\tilde H$}}}
\newcommand{\nZg}[1]{\| #1 \|_{L^{2} (\gamma)}}
\newcommand{\snsG}[1]{| #1 |_{H^s (\Gamma)}}
\newcommand{\snOHG}[1]{| #1 |_{H^{1/2} (\Gamma)}}
\newcommand{\nsbtG}[1]{\| #1 \|_{\boldsymbol{H}_t^s (\Gamma)}}
\newcommand{\nsNbttildeG}[2]{\| #1 \|_{\tilde{\boldsymbol{H}}_t^{s #2} (\Gamma)}}
\newcommand{\snsGi}[1]{| #1 |_{H^s (\Gamma_i)}}
\newcommand{\snOHGi}[1]{| #1 |_{H^{1/2} (\Gamma_i)}}
\newcommand{\nsGi}[1]{\| #1 \|_{H^s (\Gamma_i)}}
\newcommand{\nsNbtGi}[2]{\| #1 \|_{\boldsymbol{H}_t^{s #2} (\Gamma_i)}}
\newcommand{\nsNbttildeGi}[2]{\| #1 \|_{\tilde{\boldsymbol{H}}_t^{s #2} (\Gamma_i)}}
\newcommand{\nmOHbtGi}[1]{\| #1 \|_{\boldsymbol{H}_t^{-1/2} (\Gamma_i)}}
\newcommand{\snsT}[1]{| #1 |_{H^s (\mathcal{T})}}
\newcommand{\snOHT}[1]{| #1 |_{H^{1/2} (\mathcal{T})}}
\newcommand{\nsTs}[1]{\| #1 \|_{H^s_* (\mathcal{T})}}
\newcommand{\nOHTs}[1]{\| #1 \|_{H^{1/2}_* (\mathcal{T})}}
\newtheorem{theorem}{Theorem}[section]
\newtheorem{remark}{Remark}[section]
\newtheorem{lemma}{Lemma}[section]
\title{
A Nitsche-based domain decomposition method\\ for hypersingular integral equations
\thanks{Supported by FONDECYT project 1080044}
%
}
\author{ Franz Chouly
\thanks{
Laboratoire de Math\'ematiques de Besan\c{c}on, CNRS UMR 6623,
Universit\'e de Franche-Comt\'e, 16 route de Gray,
25030 Besan\c{c}on Cedex, France.
email: {\tt franz.chouly@univ-fcomte.fr}}
\and Norbert Heuer
\thanks{
Facultad de Matem\'aticas, Pontificia Universidad Cat\'olica de Chile,
Avenida Vicu\~na Mackenna 4860, Macul, Santiago, Chile.
email: {\tt nheuer@mat.puc.cl}}
}
\begin{document}
\date{}
\maketitle

\bigskip
\begin{abstract}
We introduce and analyze a Nitsche-based domain decomposition method for the solution of
hypersingular integral equations.
This method allows for discretizations with non-matching grids without the necessity of a 
Lagrangian multiplier, as opposed to the traditional mortar method.
We prove its almost quasi-optimal convergence 
and underline the theory by a numerical experiment.

\bigskip
\noindent
{\em Key words}: boundary element method, domain decomposition, Nitsche method.

\noindent
{\em AMS Subject Classification}:
65N38,  	
65N55.  	
\end{abstract}

\section{Introduction}

We propose and analyze the Nitsche method as a simple domain decomposition method
for the solution of hypersingular boundary integral equations. In this context,
simple means that (i) its implementation is not more difficult than a conforming
approach and (ii) its numerical analysis avoids mathematical difficulties inherent
to usual domain decomposition approaches. Still, a thorough analysis of our method,
given in this paper, faces the problem of non-existence of a well-posed continuous
counterpart for the discrete formulation. This is due to the low regularity of the
underlying energy space. Main attraction of the Nitsche method,
apart from its relative simplicity, is that it can maintain ellipticity and symmetry
of the original problem.

We study the hypersingular integral equation governing the Laplacian in $\R^3$
exterior to an open surface, subject to a Neumann boundary condition.
In principle, our domain decomposition approach is applicable to more
realistic problems like linear elasticity and acoustics. Nevertheless, whereas
a generalization to the Helmholtz equation is not difficult (it is a compact
perturbation of the Laplace case) there are major difficulties in case of
the operator governing the Lam\'e equation. This remains an open problem.

For the solution of partial differential equations, domain decomposition is a
classical strategy. It is used mainly for parallelization and the solution of
linear systems. A variety of techniques exist, such as alternating Schwarz methods
(see e.g. \cite{quarteroni-valli-99}). Of particular interest are methods that
allow for non-matching meshes at the interface between sub-domains. They
facilitate to a great extent mesh generation for complicated geometries.
The so-called mortar method has been designed for this purpose 
\cite{bernardi-maday-patera-93,bernardi-maday-patera-94}.
It consists in introducing an unknown Lagrangian multiplier on the interface
and adding interface conditions in a weak sense. For an analysis
of the Laplacian in two and three space dimensions see \cite{ben-belgacem-99}. 
This method transforms the original problem into a saddle-point structure,
so that any numerical scheme requires a discrete {\it inf-sup}
condition, i.e. compatibility between approximation spaces on sub-domains and
the interface.

An alternative to the mortar method is Nitsche's method, originally published
in \cite{nitsche-71,arnold-82}, and adapted in \cite{becker-hansbo-stenberg-03}
to a domain decomposition framework. The interface condition is again treated weakly;
not as an additional equation but like a penalization term in the (discrete) variational
formulation. Other terms are added to the formulation to achieve consistency and
ellipticity. Moreover, symmetry can be maintained for symmetric problems.
As a result, Nitsche's method differs from classical penalization methods where
consistency is lost \cite{becker-hansbo-stenberg-03}.

In conclusion, main advantages of Nitsche's method are that
\begin{enumerate}
 \item no additional unknown is needed on the interface,
 \item no {\it inf-sup} condition must be satisfied among discrete spaces
       (except for the global ones, of course), and that
 \item discrete problems are elliptic and can be symmetric for symmetric problems,
       so that
 \item standard linear solvers can be used.
\end{enumerate}

Nitsche's method is closely related to the stabilized method
of Barbosa \& Hughes \cite{barbosa-hughes-91,barbosa-hughes-92}, 
which also circumvents the {\it inf-sup} compatibility
condition that arises when a Dirichlet boundary condition is imposed weakly through 
Lagrangian multipliers. The connection between the two methods
is established in \cite{stenberg-95}.

In the context of partial differential equations, the Nitsche method has been applied
successfully to a variety of problems such as linear elasticity
\cite{fritz-hueber-wohlmuth-04,becker-burman-hansbo-09},
two-phase flows \cite{reusken-nguyen-09}, and fluid-structure interaction 
\cite{hansbo-hermansson-svedberg-04,burman-fernandez-09,astorino-chouly-fernandez-09}.

In the context of boundary integral equations and the use of non-matching grids
or weakly imposed boundary (or interface) conditions, we only know of the results
\cite{gatica-healey-heuer-09,healey-heuer-09}. Both analyze a setting based
on Lagrangian multipliers. The former reference provides the basic results
like an integration-by-parts formula for the hypersingular integral operator, and
analyzes the implementation of Dirichlet boundary conditions in a fractional order
Sobolev space of order $1/2$. The latter reference proposes and analyzes the mortar
domain decomposition approach for the hypersingular integral equation.
An extreme case, the use of discontinuous basis functions for
hypersingular operators, is studied in \cite{HeuerS_09_CRB}.

Let us also mention that there are several papers on domain decomposition involving
boundary elements, e.g. \cite{HsiaoW_91_DD} where standard boundary elements are used
for problems on sub-domains of the PDE problem, and \cite{Tran_ASM_h,heuer-01}
which analyze domain decomposition for boundary elements in the construction of
preconditioners. These papers do not deal with the problem of approximating
functions (of fractional order Sobolev spaces) in a non-conforming way.

In this paper we propose and analyze a Nitsche domain decomposition variant
for the hypersingular integral equation governing the Laplacian. Although this
approach is simpler than mortar strategies in important aspects, as explained before,
there are some non-trivial obstacles in its numerical analysis. Energy spaces
of hypersingular operators are fractional order Sobolev spaces of order $1/2$.
These spaces form the natural basis for variational formulations. Now, domain
decomposition introduces interfaces where discontinuities arise. In the variational
setting, these discontinuities are not well posed, simply because no well-defined
trace operator exists. Therefore, we analyze the discrete Nitsche method without
using a corresponding variational formulation. This is very much in the spirit
of Strang's second lemma for non-conforming methods. The difficulty of non-existence
of a well-posed trace operator reappears in the analysis of the discrete problem.
We deal with this problem by making use of a whole scale of Sobolev spaces (of higher
regularity than $1/2$) and by using inverse properties of discrete functions.
The result is an almost quasi-optimal error estimate for the Nitsche method.
Here, ``almost'' refers to perturbations which are only logarithmic in the mesh size.

The rest of this paper is organized as follows. In \S\ref{sec_model} we define
some Sobolev spaces and our model problem. We also briefly recall the standard
boundary element approximation. In \S\ref{sec_discrete} we introduce a domain
decomposition (for simplicity only into two sub-domains; but this generalizes
to more sub-domains in a straightforward way), the Nitsche-based discretization, 
and present our main result (Theorem~\ref{thm_error}). 
Technical details and the proof of Theorem~\ref{thm_error} are given in
\S\ref{sec_proofs}. In \S\ref{sec_num} we present some numerical experiments
that confirm the theoretical result.

Throughout the article, we will use the symbols "$\lesssim$" and "$\gtrsim$"
in the usual sense. In short $a_h(v) \lesssim b_h(v)$ when there exists a constant
$C > 0$ independent of $v$, the mesh size $h$ and a fractional Sobolev index
$\varepsilon$ (if present), such that:\ $a_h(v) \leq C \, b_h(v)$.

\section{Sobolev spaces and model problem} \label{sec_model}
\setcounter{equation}{0}
\setcounter{figure}{0}
\setcounter{table}{0}

First let us briefly define the needed Sobolev spaces.
We consider standard Sobolev spaces where the following norms are used:
For $\Omega\subset\R^n$ and $0<s<1$ we define
\[
   \|u\|^2_{H^s(\Omega)}:=\|u\|^2_{L^2(\Omega)} + |u|^2_{H^s(\Omega)}
\]
with semi-norm
\[
    |u|_{H^s(\Omega)} := 
    \Bigl(
    \int_\Omega \int_\Omega \frac{|u(x)-u(y)|^2}{|x-y|^{2s+n}} \,dx\,dy
    \Bigr)^{1/2}.
\]
For a Lipschitz domain $\Omega$ and $0<s<1$, the space
$\tilde H^s(\Omega)$ is defined as the completion of $C_0^\infty(\Omega)$
under the norm
\[
   \|u\|_{\tilde H^s(\Omega)}
   :=
   \Bigl(
   |u|^2_{H^{s}(\Omega)}
   +
   \int_\Omega \frac{u(x)^2}{(\dist(x,\partial\Omega))^{2s} } \,dx
   \Bigr)^{1/2}.
\]
For $s\in (0,1/2)$, $\|\cdot\|_{\tilde H^s(\Omega)}$ and $\|\cdot\|_{H^s(\Omega)}$
are equivalent norms whereas for $s\in(1/2,1)$ there holds
$\tilde H^s(\Omega) = H_0^s(\Omega)$, the latter space being the completion
of $C_0^\infty(\Omega)$ with norm in $H^s(\Omega)$. Also we note that
functions from $\tilde H^s(\Omega)$ are continuously extendible by zero onto a larger
domain. For all these results we refer to \cite{lions-magenes-72, grisvard-85}.
For $s>0$ the spaces $H^{-s}(\Omega)$ and $\tilde H^{-s}(\Omega)$ are the
dual spaces of $\tilde H^s(\Omega)$ and $H^s(\Omega)$, respectively.

Let $\Gamma$ be a plane open surface with polygonal boundary. In the following
we will identify $\Gamma$ with a domain in $\R^2$, thus referring to sub-domains
of $\Gamma$ rather than sub-surfaces. The boundary of $\Gamma$ is denoted by
$\partial \Gamma$.

Our model problem is:
{\em For a given function $f\in L^2(\Gamma)$ find $u\in\Hoo(\Gamma)$ such that}
\begin{equation} \label{IE}
   Wu(x):=-\frac 1{4\pi}\frac {\partial}{\partial \bn_x}
              \int_{\Gamma} u(y) \frac {\partial}{\partial \bn_y} \frac 1{|x-y|}
              \,dS_y
   = f(x),\quad x\in\Gamma.
\end{equation}

Here, $\bn$ is a normal unit vector on $\Gamma$, e.g. $\bn=(0,0,1)^T$.
Note that $W$ maps $\Hoo(\Gamma)$ continuously onto $H^{-1/2}(\Gamma)$
(see \cite{costabel-88}).
The variational formulation of (\ref{IE}) is:
{\em Find $u\in\tilde H^{1/2}(\Gamma)$ such that}
\begin{equation} \label{weak_org}
   \<Wu, v\>_\Gamma
   = \<f, v\>_\Gamma\quad\forall v\in \Hoo(\Gamma).
\end{equation}
Here, $\<\cdot,\cdot\>_{\Gamma}$ denotes the duality pairing 
between $H^{-1/2}(\Gamma)$ and $\Hoo(\Gamma)$. 
Throughout, this generic notation will be used for the $L^2$-inner product
as well as for other dualities,
the domain being mentioned by the index.

A standard boundary element method for the approximate solution of (\ref{weak_org})
is to select a piecewise polynomial subspace $\tilde H_h\subset \tilde H^{1/2}(\Gamma)$
and to define an approximant $\tilde u_h\in\tilde H_h$ by
\begin{equation} \label{bem}
\<W\tilde u_h, v\>_\Gamma
   = \<f, v\>_\Gamma\quad\forall v \in\tilde H_h.
\end{equation}
Such a scheme is known to converge quasi-optimally in the energy norm. In \S\ref{sec_num}
we will compare such a conforming approximation with our proposed Nitsche approach
and a Lagrangian multiplier variant.

\section{Discrete variational formulation with Nitsche coupling}
\label{sec_discrete}
\setcounter{equation}{0}
\setcounter{figure}{0}
\setcounter{table}{0}

In this section, we introduce the Nitsche-based boundary element method
for the approximate solution of problem \eqref{weak_org}, and present the
main result, Theorem~\ref{thm_error}.

\subsection{Some preliminaries}

We consider a decomposition of $\Gamma$ into two non-intersecting polygonal
sub-domains $\Gamma_1$ and $\Gamma_2$. 
The extension to an arbitrary number $N$ of sub-domains
is straightforward. 
We will denote this partition of $\Gamma$ as
$$
\mathcal{T} := \{ \Gamma_1 , \Gamma_2 \}.
$$
The interface between the sub-domains is denoted
by $\gamma := \bar\Gamma_1 \cap \bar\Gamma_2$.
Throughout the paper, we will use the notation $v_i$ for the restriction of a function $v$
to a sub-domain $\Gamma_i$. Also, as in \cite{becker-hansbo-stenberg-03}, we will use
the following notation 
for the jump 
on $\gamma$:
\begin{align*}
\begin{aligned}{}
[ v ] & := &(v_1 - v_2) |_\gamma. 
\end{aligned}
\end{align*}
Corresponding to the decomposition of $\Gamma$, we will need product Sobolev spaces,
e.g.
$$
H^s (\mathcal{T}) := H^s ( \Gamma_1 ) \times H^s ( \Gamma_2 ),
$$
with usual product norm. This notation (putting the decomposition $\mathcal{T}$
instead of $\Gamma$) is used generically, i.e. also for the spaces $\tilde H^s(\mathcal{T})$.
We introduce the following inner product
\[
\< v , w \>_{\mathcal{T}} := 
\< v_1 , w_1 \>_{\Gamma_1} + \< v_2 , w_2 \>_{\Gamma_2}
\]
for $v,w \in L^2 (\mathcal{T}) ( = L^2 (\Gamma) )$ and its extension by duality to
$\tilde H^s(\mathcal{T})\times H^{-s}(\mathcal{T})$.

For $1/2 \leq s \leq 1$, we introduce the following (semi-)norms, that are needed for
the error analysis:
\begin{align}
\left \{
\begin{aligned}
&\snsT{v}^2 & := &
\sum_{i=1}^2 \snsGi{v}^2,\\
&\nsTs{v}^2 & := &
\sum_{i=1}^2 \snsGi{v}^2  + \|[v]\|_{L^2(\gamma)}^2,
\end{aligned}
\right.
\end{align}
where $\snsGi{v}$ is the
Sobolev-Slobodeckij semi-norm as previously defined. The case $s=1/2$ will be
used only for discrete functions where the jump across $\gamma$ is well defined.

To introduce the discrete scheme, let us define regular, quasi-uniform
meshes $\CT_i$, $i=1,2$, of shape regular elements (quadrilaterals or triangles):
$\bar \Gamma_i = \cup_{K \in \CT_i } \bar K$.
The maximum, respectively minimum, diameter of the elements of $\CT_i$ is denoted by
$h_i$, respectively $\underline{h}_i$, and we
define:
$$
h := \max \  \{ h_1 , h_2 \},\qquad
\underline{h} := \min\{\underline{h}_1, \underline{h}_2\}.
$$
Throughout this paper we assume that $h<1$. This is no restriction of generality and
is just needed to simplify the writing of logarithmic terms.
We introduce discrete spaces on sub-domains consisting of piecewise (bi)linear
functions:
\[
   X_{h,i} := \{ v \in C^0(\Gamma_i);\; v|_K\
                \mbox{is a polynomial of degree one for all }\  K \in \CT_i;\;
                v|_{\partial \Gamma \cap \partial \Gamma_i} = 0
                \},
\]
for $i=1,2$. We define a global discrete space on $\Gamma$:
\[
	X_h := X_{h,1} \times X_{h,2}.
\]
Note that functions $v \in X_h$ do satisfy the homogeneous boundary condition along
$\partial \Gamma$ but are in general discontinuous across the interface $\gamma$.
Therefore  $X_h\not\subset \Hoo(\Gamma)$, and this discrete space cannot be used directly
for the discretization \eqref{bem}. Instead, we reformulate \eqref{bem} as a Nitsche variant
so that $X_h$ can be used to approximate the continuous problem \eqref{weak_org}.

\subsection{Setting of the Nitsche-based domain decomposition}

For the setup of the Nitsche method let us introduce the following
surface differential operators:
\[
   \bcurl \varphi  := \bigl(\partial_{x_2}\varphi, -\partial_{x_1}\varphi, 0\bigr),\quad
   \curl \bvarphi
   := \partial_{x_1}\varphi_2 - \partial_{x_2}\varphi_1
   \quad \mathrm{for} \quad \bvarphi = (\varphi_1,\varphi_2,\varphi_3).
\]
The definitions of the surface curl operators are appropriate just for flat surfaces (as in our
case) but can be extended to open and closed Lipschitz surfaces (see e.g.
\cite{buffa-costabel-sheen-02, gatica-healey-heuer-09}). We define corresponding
piecewise differential operators $\bcurlT$ and $\curlT$ as follows:
\[
\bcurlT \varphi := \sum_{i=1}^2 (\bcurlS{\Gamma_i} \varphi_i)^0, \quad 
\curlT \bvarphi := \sum_{i=1}^2 (\curlS{\Gamma_i} \bvarphi_i)^0,
\]
where $\bcurlS{\Gamma_i}$ and $\curlS{\Gamma_i}$ refer to the restrictions of
$\bcurl$ and $\curl$, respectively,  to $\Gamma_i$, and $(\cdot)^0$ indicates
extension by zero to $\Gamma$. We made use of the notation introduced before
$\varphi_i = \varphi|_{\Gamma_i}$, $\bvarphi_i = \bvarphi|_{\Gamma_i}$.
Furthermore, we need the single layer potential operator $V$ defined by:
\[
   V\bvarphi(x) := \frac 1{4\pi}\int_{\Gamma} \frac {\bvarphi(y)}{|x-y|}\,dS_y,
   \quad \bvarphi\in (\tilde H^{-1/2}(\Gamma))^3,\ x\in\Gamma.
\]
We define the following bilinear form on $X_h \times X_h$:
\begin{align}
\label{nitsche-bilin}
\begin{aligned}
A_\mathcal{T} ( u_h , v_h ) & := 
\<V \bcurlT u_h, \bcurlT v_h \>_{\mathcal{T}}\\
& + \frac12 \<T_1 u_h - T_2 u_h , [v_h] \>_{\gamma}
+ \frac\sigma2 \< [u_h] ,
T_1 v_h - T_2 v_h \>_{\gamma}\\
& + \nu \< [u_h] , [v_h] \>_{\gamma},
\end{aligned}
\end{align}
where $\nu > 0$ and $\sigma \in \{-1,1\}$ are numerical parameters.
The operators $T_i$ are defined as follows:
\[
T_i v = [ (V \bcurlT v)|_{\Gamma_i} \cdot \bt_i ]|_{\gamma}
\]
%
for $i=1,2$. 
Here, $\bt_i$ is the unit tangential vector on $\partial \Gamma_i$ 
(in mathematically positive orientation when identifying 
$\Gamma_i$ with a subset of $\R^2$  which is compatible with the identification of
$\Gamma$ as a subset of $\R^2$).
Note that $T_i v$ is not well defined for $v \in \Hoo(\Gamma)$
in general since there is no well-defined trace from $H^{1/2} (\Gamma_i)$ to 
$\partial \Gamma_i$.

The Nitsche-based boundary element method associated to problem \eqref{weak_org}
then reads:

{\it Find $u_h \in X_h$ such that}
\begin{align}
\label{nitsche-hypersingular}
\begin{aligned}
A_\mathcal{T} (u_h,v_h) = \<f, v_h\>_\Gamma 
\end{aligned}
\end{align}
for all $v_h \in X_h$.

\begin{remark}
For any function $u \in H^s (\Gamma)$ ($s > 1/2$), in particular for the solution of \eqref{IE}, there holds
$\< [u] , [v] \>_{\gamma} = \< [u] , T_1 v - T_2 v \>_{\gamma} = 0$ for sufficiently smooth $v$. Therefore the
terms $\< [u_h] , [v_h] \>_{\gamma}$ and $\< [u_h] , T_1 v_h - T_2 v_h \>_{\gamma}$ are not required for consistency
of \eqref{nitsche-hypersingular}. However, 
the additional term $\frac\sigma2 \< [u_h] , T_1 v_h - T_2 v_h \>_{\gamma}$
in the Nitsche-based formulation is of interest for two reasons \cite[Remark 2.11]{becker-hansbo-stenberg-03}. 
First, for $\sigma = 1$, 
the bilinear form $A_\mathcal{T} (\cdot,\cdot)$ becomes symmetric, as in the standard
case \eqref{weak_org}. This allows in particular to make use of fast linear solvers for symmetric 
matrices. 
Also, for $\sigma = -1$, symmetry is lost, but we recover ellipticity of $A_\mathcal{T} (\cdot,\cdot)$ for 
any value of the parameter $\nu > 0$ (see Lemma \ref{la_ell} (i)). 
In fact, any value of $\sigma$ (including $\sigma = 0$) can be
chosen though only values $-1$ and $1$ lead to interesting particular cases.
\end{remark}


The main result of this paper is:

\begin{theorem} \label{thm_error}
Let $u\in H^r(\Gamma)$ with $r\in (1/2,1)$ be the solution of \eqref{IE}.
In the case $\sigma=-1$ let $\nu>0$ be arbitrary and in the case
$\sigma=1$ let $\nu\ge C_1\,|\log \underline{h}|^{3}$ for a sufficiently large
constant $C_1>0$. Then, the discrete problem \eqref{nitsche-hypersingular}
is uniquely solvable and there exists a constant $C_2>0$, depending on
$\nu$ and $r$, but not on $u$ and the actual mesh, such that there
holds the error estimate
\[
   \nOHTs{u-u_h}
   \le
   C_2 |\log \underline{h}|^{3/2} h^{r-1/2} \|u\|_{H^r(\Gamma)}.
\]
\end{theorem}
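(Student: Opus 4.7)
The plan is to follow a Strang-second-lemma strategy tailored to this non-conforming setting: (a) coercivity of $A_\mathcal{T}$ on $X_h$ with respect to $\nOHTs{\cdot}$, (b) a consistency identity $A_\mathcal{T}(u,v_h) = \<f,v_h\>_\Gamma$ for the exact solution, (c) a continuity estimate for $A_\mathcal{T}$ in a norm large enough to accommodate both $u$ and $v_h$, and (d) an approximation result for a quasi-interpolant $w_h \in X_h$. Steps (a)--(b) produce unique solvability together with the orthogonality $A_\mathcal{T}(u-u_h,v_h)=0$, and then combining (a), (c), (d) via
\[
    \nOHTs{u_h - w_h}^2 \lesssim A_\mathcal{T}(u_h - w_h, u_h - w_h) = A_\mathcal{T}(u - w_h, u_h - w_h)
\]
and a triangle inequality will yield the stated error bound.

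Coercivity is where the main difficulty lies. For $v_h \in X_h$ expansion gives
\[
    A_\mathcal{T}(v_h,v_h) = \<V\bcurlT v_h,\bcurlT v_h\>_\mathcal{T} + \frac{1+\sigma}{2}\<T_1 v_h - T_2 v_h,[v_h]\>_\gamma + \nu \|[v_h]\|_{L^2(\gamma)}^2,
\]
whose first term controls $\snOHT{v_h}^2$ by $\tilde H^{-1/2}$-ellipticity of $V$ together with the mapping properties of $\bcurlT$. For $\sigma=-1$ the cross term is absent and any $\nu>0$ works. For $\sigma=1$ one has to absorb $\<T_1 v_h - T_2 v_h,[v_h]\>_\gamma$ into the other two contributions, which in turn requires an estimate of the form $\|T_i v_h\|_{L^2(\gamma)} \lesssim |\log\underline h|^{3/2}\,\snOHGi{v_h}$. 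Since the continuous operator $T_i$ is \emph{not} bounded on $H^{1/2}(\Gamma_i)$, this estimate must be extracted by factoring through $H^s(\Gamma_i)$ for some $s>1/2$ where $T_i$ is bounded, and paying a logarithmic price through an inverse inequality on the piecewise linear space. Tracking the exponent of this logarithm is the core technical obstacle and is precisely what forces the hypothesis $\nu \geq C_1 |\log\underline h|^3$.

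Consistency is obtained by applying the Maue-type identity $\<Wu,v\>_\Gamma = \<V\bcurl u,\bcurl v\>_\Gamma$ piecewise on $\Gamma_1$ and $\Gamma_2$: the break-up into sub-domains produces a boundary contribution along $\gamma$ which, after accounting for the opposite orientations of the tangential vectors and using $[u]=0$ (valid since $u\in H^r(\Gamma)$ with $r>1/2$), collapses exactly to $\tfrac12\<T_1 u - T_2 u,[v_h]\>_\gamma$, while the two interface terms in $A_\mathcal{T}$ containing $[u]$ vanish identically. Continuity then follows by Cauchy--Schwarz on each term of $A_\mathcal{T}(u-w_h,v_h)$, using $H^s$-continuity of $T_i$ for the exact-solution factor $u-w_h$ and the discrete $T_i$-bound of the previous paragraph for the factor $v_h$. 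Finally, choosing $w_h$ as a Scott--Zhang type quasi-interpolant of $u$ separately on each $\CT_i$ provides the approximation estimate $\snOHT{u-w_h} + \|[u-w_h]\|_{L^2(\gamma)} \lesssim h^{r-1/2}\|u\|_{H^r(\Gamma)}$ by standard arguments on fractional order spaces; assembling the four ingredients delivers the claimed $|\log\underline h|^{3/2}h^{r-1/2}$ rate, and unique solvability of \eqref{nitsche-hypersingular} is an immediate consequence of coercivity on the finite-dimensional space $X_h$.
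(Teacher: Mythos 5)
Your overall route matches the paper's: unique solvability from discrete ellipticity, consistency via the Maue identity and integration by parts on each $\Gamma_i$ (using $[u]=0$ and $T_1 u + T_2 u = 0$), a mesh-dependent continuity bound through the logarithmic trace/inverse estimate $\|T_i v_h\|_{L^2(\gamma)}\lesssim|\log\underline h|^{3/2}\snOHT{v_h}$, and a piecewise quasi-interpolant. Your diagnosis of the core difficulty (no bounded trace $T_i$ on $H^{1/2}$, forcing a detour through $H^s$ with $s>1/2$ and an inverse inequality that produces the $|\log\underline h|^3$ threshold for $\nu$) is exactly the paper's Lemma~\ref{TILemma}. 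So the skeleton is right.

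The one place your sketch does not close to the stated exponent is the Céa step $\nOHTs{u_h-w_h}^2\lesssim A_\mathcal{T}(u-w_h,u_h-w_h)$ followed by continuity ``in $\nOHTs{\cdot}$.'' The first term $\<V\bcurlT(u-w_h),\bcurlT(u_h-w_h)\>_{\mathcal T}$ is naturally bounded by $\|\bcurlT(u-w_h)\|_{\tilde{\boldsymbol H}_t^{-1/2}(\Gamma)}\,\|\bcurlT(u_h-w_h)\|_{\tilde{\boldsymbol H}_t^{-1/2}(\Gamma)}$, and the second factor \emph{cannot} be bounded by $\snOHT{u_h-w_h}$ with a mesh-independent constant: the $\tilde{\boldsymbol H}_t^{-1/2}$ and $\boldsymbol H_t^{-1/2}$ norms fail to be equivalent at the endpoint exponent, and converting costs $\frac1s\underline h^{-s}$ by an inverse estimate (exactly as in (\ref{pf_m2}) applied to a discrete function). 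Combined with the $\frac1s$ already paid on the $u-w_h$ factor, the optimal choice $s=|\log\underline h|^{-1}$ then yields a log power strictly larger than $3/2$. The paper sidesteps this by proving, in addition to $A_\mathcal T(v_h,v_h)\gtrsim\nOHTs{v_h}^2$, the asymmetric ellipticity $A_\mathcal T(v_h,v_h)\gtrsim\bigl(\|\bcurlT v_h\|_{\tilde{\boldsymbol H}_t^{-1/2}(\Gamma)}+\|[v_h]\|_{L^2(\gamma)}\bigr)\nOHTs{v_h}$ (second assertion of Lemma~\ref{la_ell}), and then writes the Strang quotient with $\|\bcurlT w_h\|_{\tilde{\boldsymbol H}_t^{-1/2}(\Gamma)}+\|[w_h]\|_{L^2(\gamma)}$ in the denominator, so the $V$-term cancels directly and only the $s^{-3/2}$ from $\|T_i(u-v_h)\|_{L^2(\gamma)}$ survives. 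You need this device (or, equivalently, Cauchy--Schwarz in the $V$-induced inner product together with $a(z_h,z_h)\lesssim A_\mathcal T(z_h,z_h)$) to recover $|\log\underline h|^{3/2}$; as written, your argument proves the theorem only with a weaker logarithmic factor.
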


A proof of this result will be given at the end of Section~\ref{sec_proofs}.

\begin{remark}
It is known that $u\in H^r(\Gamma)$ for any $r<1$, see, e.g.,
{\rm\cite{stephan-86}}. Using this regularity, Theorem~{\rm\ref{thm_error}}
proves a convergence which is close to $O(h^{1/2})$, the optimal one. The
reduction to $h^{r-1/2}$ for any $r<1$ is due to the assumed regularity
in standard Sobolev spaces, and not a sub-optimality of the method. On
the other hand, the logarithmic perturbation $|\log \underline{h}|^{3/2}$
is due to the Nitsche coupling, and is also present in non-conforming
approaches (the same exponent $3/2$ appears in the Lagrangian multiplier
approach {\rm\cite{gatica-healey-heuer-09}}, and in the mortar coupling
{\rm\cite{healey-heuer-09}} the exponent is $2$). It is unknown whether these
logarithmic terms in the upper bounds are optimal.
\end{remark}

\section{Technical results and the proof of the main theorem}
\label{sec_proofs}
\setcounter{equation}{0}
\setcounter{figure}{0}
\setcounter{table}{0}

In \S\ref{sub_preliminar}, we present some preliminary results and lemmas. 
In \S\ref{sub_consistency} we then prove the consistency of the method,
using an integration-by-parts
formula coming from \cite{gatica-healey-heuer-09,healey-heuer-09}. 
Discrete continuity and discrete ellipticity are studied in \S\ref{sub_ellipticity}. 
We conclude with the proof of the main theorem
in \S\ref{sub_maintheorem}.

The steps followed in the error analysis are quite similar to those of the analysis of
a Nitsche-based method for finite elements (see e.g. \cite{becker-hansbo-stenberg-03}). 
The main difficulty in the case of boundary elements consist in the non-existence of a
well-posed variational Nitsche formulation. Error estimates are wanted in spaces related to
$H^{1/2} (\Gamma)$ where no well-defined trace operator exists. Therefore, the numerical analysis
of \eqref{nitsche-hypersingular} makes use of a whole family of Sobolev spaces $H^r$ with $r$ close
to $1/2$. Additional difficulty in our case is that operators are non-local in contrast to the
finite element setting.
In opposition to the mortar boundary element method \cite{healey-heuer-09}, 
no {\it inf-sup} condition needs to be checked since no Lagrangian multipliers are introduced. 

\subsection{Preliminary results}
\label{sub_preliminar}




We first introduce the following spaces for the definition of the single layer potential
operator $V$ (see \cite{gatica-healey-heuer-09}):
\begin{align}
\begin{aligned}
&\tbH_t^{s-1}(\Gamma)  &:= &\
   \{ \bpsi \in \bigl(\tilde H^{s-1}(\Gamma)\bigr)^3;\; \bpsi\cdot\bn = 0\},\\
&\bH_t^{s}(\Gamma)  &:= &\
   \{ \bpsi \in \bigl(H^{s}(\Gamma)\bigr)^3;\; \bpsi\cdot\bn = 0\},
\end{aligned}
\end{align}
where $0 \leq s \leq 1$ and the normal vector $\bn$ has been defined previously. 
We will make use of the continuity (see \cite{costabel-88}):
\begin{equation}
\label{cont_V}
V : \tilde{\boldsymbol{H}}_t^{s-1} (\Gamma) \rightarrow \boldsymbol{H}_t^{s} (\Gamma),
\quad 0 \leq s \leq 1.
\end{equation}

\begin{lemma}
\label{TILemma}
For $i=1,2$ there holds
\begin{align}
\nZg{T_i v} & \lesssim && (s-1/2)^{-1/2} && \nsNbttildeG{\bcurlT v}{-1} &&
\quad\forall v\in H^s(\mathcal{T}),\ & 1/2<s\le 1,
\label{trace-inv1}\\
\nZg{T_i v} & \lesssim && (s-1/2)^{-3/2} && \snsT{v} &&
\quad\forall v\in H^s(\mathcal{T}),\ & 1/2<s\le 1,
\label{trace-inv2}\\
\nZg{T_i v_h} & \lesssim && |\log \underline{h} \,|^{3/2} && \snOHT{v_h} &&
\quad\forall v_h \in X_h.
\label{trace-inv3}
\end{align}
\end{lemma}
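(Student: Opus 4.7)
The plan is to establish the three bounds in order, since each builds on the previous together with additional structural arguments. The first follows from continuity of $V$ and the classical trace theorem with explicit constant. The second requires sharp control of $\|\bcurlT v\|_{\tilde{\boldsymbol{H}}_t^{s-1}(\Gamma)}$ by the piecewise seminorm $|v|_{H^s(\mathcal{T})}$, whose constant degrades as $s\to 1/2^+$; this is the main obstacle. The third is obtained from the second by an optimal choice of $s=1/2+\varepsilon$ together with a fractional inverse inequality for discrete piecewise polynomials.

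For the first estimate, I would observe that $T_i v$ is produced from $V\bcurlT v$ by restriction to $\Gamma_i$, projection onto the tangential direction $\bt_i$, and trace onto $\gamma\subset\partial\Gamma_i$. The continuity \eqref{cont_V} gives $\|V\bcurlT v\|_{\boldsymbol{H}_t^s(\Gamma)}\lesssim\|\bcurlT v\|_{\tilde{\boldsymbol{H}}_t^{s-1}(\Gamma)}$ with a constant independent of $s$. Then I would invoke the classical trace estimate $\|w\|_{L^2(\partial\Omega)}\le C(s)\,\|w\|_{H^s(\Omega)}$ for $s>1/2$, whose sharp constant satisfies $C(s)\lesssim(s-1/2)^{-1/2}$; this rate is standard and can be obtained, for instance, by interpolation between the failure at $s=1/2$ and continuity at $s=1$.

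For the second estimate, the new ingredient is the bound $\|\bcurlT v\|_{\tilde{\boldsymbol{H}}_t^{s-1}(\Gamma)}\lesssim(s-1/2)^{-1}\,|v|_{H^s(\mathcal{T})}$. I would proceed by duality: for $\bvarphi\in\boldsymbol{H}_t^{1-s}(\Gamma)$,
\[
\langle\bcurlT v,\bvarphi\rangle_\Gamma=\sum_{i=1}^2\int_{\Gamma_i}\bcurlS{\Gamma_i}v_i\cdot\bvarphi|_{\Gamma_i}\,dS.
\]
On each sub-domain one applies the piecewise estimate $\|\bcurlS{\Gamma_i}v_i\|_{H^{s-1}(\Gamma_i)}\lesssim|v_i|_{H^s(\Gamma_i)}$, valid with constants uniform in $s$ since $\bcurl$ annihilates constants, and dualises via the pairing between $H^{s-1}(\Gamma_i)$ and $\tilde H^{1-s}(\Gamma_i)$. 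Since $1-s\in(0,1/2)$, the equivalence $\|\bvarphi|_{\Gamma_i}\|_{\tilde H^{1-s}(\Gamma_i)}\lesssim(s-1/2)^{-1}\|\bvarphi|_{\Gamma_i}\|_{H^{1-s}(\Gamma_i)}$ follows from Hardy's inequality with sharp $s$-dependent constant; this is the delicate step, and drives the final $(s-1/2)^{-3/2}$ exponent once combined with the trace factor $(s-1/2)^{-1/2}$ from the first estimate.

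For the third estimate, I would apply the second at $s=1/2+\varepsilon$ with $\varepsilon\in(0,1/2)$ to obtain $\|T_i v_h\|_{L^2(\gamma)}\lesssim\varepsilon^{-3/2}\,|v_h|_{H^{1/2+\varepsilon}(\mathcal{T})}$, and then use the fractional inverse inequality $|v_h|_{H^{1/2+\varepsilon}(\mathcal{T})}\lesssim\underline{h}^{-\varepsilon}\,|v_h|_{H^{1/2}(\mathcal{T})}$ available for piecewise (bi)linear functions on a quasi-uniform mesh. Choosing $\varepsilon=1/|\log\underline{h}|$ (admissible for $\underline{h}$ small enough; the remaining case is absorbed into constants) gives $\underline{h}^{-\varepsilon}=\mathrm{e}$ and $\varepsilon^{-3/2}=|\log\underline{h}|^{3/2}$, yielding the desired bound.
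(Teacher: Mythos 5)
Your proposal is correct and follows essentially the same route as the paper: continuity of $V$ plus the $(s-1/2)^{-1/2}$ trace constant for \eqref{trace-inv1}; the degrading equivalence between $\tilde H$ and $H$ spaces near the critical index (yielding a further $(s-1/2)^{-1}$) for \eqref{trace-inv2}; and the fractional inverse inequality with the choice $s=1/2+|\log\underline{h}|^{-1}$ for \eqref{trace-inv3}. The only cosmetic difference is that you phrase the second step via duality against $\boldsymbol{H}_t^{1-s}$ test functions and invoke Hardy's inequality explicitly, whereas the paper applies the equivalence $\tilde{\boldsymbol{H}}_t^{s-1}(\Gamma_i)\sim\boldsymbol{H}_t^{s-1}(\Gamma_i)$ directly (citing a specific lemma); both give the same $(s-1/2)^{-1}$ factor.
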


\begin{proof}
Let $v \in H^s(\mathcal{T})$, with $1/2 < s \leq 1$.
We use the trace theorem \cite[Lemma 4.3]{gatica-healey-heuer-09} and the
continuity of $V$ \eqref{cont_V} to bound
\[
   \nZg{T_i v}^2
   \lesssim
   \frac 1{s-1/2} \nsbtG{V \bcurlT v}^2
   \lesssim
   \frac 1{s-1/2}
   \nsNbttildeG{\bcurlT v}{-1}^2.
\]
This proves \eqref{trace-inv1}.
Further, by using the equivalence of the
${\boldsymbol{H}}_t^{s-1} (\Gamma_i)$ and $\tilde {\boldsymbol{H}}_t^{s-1} (\Gamma_i)$ norms
for $|s-1|<1/2$ \cite[Lemma 5]{heuer-01} and the boundedness of
$\bcurlS{\Gamma_i} : H^{s}(\Gamma_i) \rightarrow \boldsymbol{H}_t^{s-1} (\Gamma_i)$
\cite[Lemma 3.4]{healey-heuer-09}), we obtain
\begin{align*}
   \nsNbttildeG{\bcurlT v}{-1}^2
   &\lesssim
   \sum_{i=1}^2 \nsNbttildeGi{\bcurlS{\Gamma_i} v}{-1}^2 
   \\
   &\lesssim
   \frac 1{(s-1/2)^2} \sum_{i=1}^2 \nsNbtGi{\bcurlS{\Gamma_i} v}{-1}^2
   \lesssim
   \frac1{(s-1/2)^2}  \sum_{i=1}^2 \nsGi{v}^2.
\end{align*}
Combining these two estimates and using a quotient space argument proves \eqref{trace-inv2}.

Now picking $v_h \in X_h$, we have:
\begin{align*}
\nZg{T_i v_h} & \lesssim  (s-1/2)^{-3/2} \snsT{v_h}\\
& \lesssim  \underline{h}^{1/2-s} (s-1/2)^{-3/2} \snOHT{v_h},
\end{align*}
using \eqref{trace-inv2} and then the inverse property
$\snsGi{v} \lesssim \underline{h}^{1/2-s} \snOHGi{v}$
(see, e.g., \cite[Lemma 4]{heuer-01} together with a quotient space
argument). With the choice $s = 1/2 + |\log \underline{h}|^{-1}$, this proves
\eqref{trace-inv3}.
\end{proof}

\subsection{Consistency of the Nitsche formulation}
\label{sub_consistency}

In this part, we show that the Nitsche formulation \eqref{nitsche-hypersingular}
for the hypersingular operator is consistent, a classical result for the
Nitsche method in the standard case (see e.g. \cite{becker-hansbo-stenberg-03}). 
One difficulty here is that the boundary operator $T_i$ is not well defined for
$v \in H^{1/2} (\Gamma)$. Nevertheless, we can take advantage of previous results
proven in \cite{gatica-healey-heuer-09}. 

First, we need to start from an appropriate
integration-by-parts formula for the hypersingular operator.
For the convenience of the reader we recall the setting from 
\cite{gatica-healey-heuer-09,healey-heuer-09}.
For a smooth scalar function $v$ and a smooth tangential vector field
$\bvarphi$, integration by parts on $\Gamma_i$ gives
\[
  \<\bvarphi\cdot\bt_i, v_i\>_{\partial \Gamma_i}
   =
   \<\curlS{\Gamma_i}\bvarphi, v_i\>_{\Gamma_i}
   - \<\bcurlS{\Gamma_i} v_i, \bvarphi\>_{\Gamma_i} ,
\]
for $i=1,2$.
We apply this formula to $\bvarphi = (V \bcurl u)|_{\Gamma_i}$, so that:
\begin{align*}
 \<(V \bcurl u)|_{\Gamma_i}\cdot\bt_i, v_i\>_{\partial \Gamma_i}
   =
   \<\curlS{\Gamma_i}(V \bcurl u), v_i\>_{\Gamma_i} -
      \<\bcurlS{\Gamma_i} v_i, V \bcurl u \>_{\Gamma_i}.
\end{align*}
Recalling the definition of $T_i$, and using a function $v$ that vanishes on $\partial \Gamma$,
we obtain
\begin{align}
\label{ipp2}
 \< T_i u , v_i \>_{\gamma}
   =
   \<\curlS{\Gamma_i}(V \bcurl u), v_i\>_{\Gamma_i} -
      \<\bcurlS{\Gamma_i} v_i, V \bcurl u \>_{\Gamma_i}.
\end{align}
Let us recall the following lemma from 
\cite{healey-heuer-09} (Lemma 3.5):

\begin{lemma} \label{la_op}
For $u\in\Hoo(\Gamma)$ with $Wu=f\in L^2(\Gamma)$, 
the equation {\rm(\ref{ipp2})}
defines $T_i u = (V\bcurl u)|_{\Gamma_i} \cdot \bt_i  \in H^{-s}(\gamma)$,
with $0 < s \leq 1/2$.
\end{lemma}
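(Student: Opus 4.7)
The plan is to read~\eqref{ipp2} as a \emph{definition} of $T_i u$ as a distribution on $\gamma$, rather than as an identity between already-defined objects. For a test function $\varphi\in H^s(\gamma)$, $0<s\le 1/2$, I would pick any lift $v_i\in H^{1/2+s}(\Gamma_i)$ with $v_i|_\gamma=\varphi$ and $v_i=0$ on $\partial\Gamma\cap\partial\Gamma_i$ (available from standard trace/extension theory, with extra care at the endpoint $s=1/2$), and set $\langle T_i u,\varphi\rangle_\gamma$ equal to the RHS of~\eqref{ipp2}. Two things then have to be checked: (a)~the value is independent of the chosen lift, and (b)~the resulting map $\varphi\mapsto\langle T_i u,\varphi\rangle_\gamma$ is bounded on $H^s(\gamma)$; by duality this yields $T_i u\in H^{-s}(\gamma)$.

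The hypothesis $Wu=f\in L^2(\Gamma)$ is precisely what furnishes the extra regularity required to bound both terms of the RHS. From $u\in\tilde H^{1/2}(\Gamma)$ one gets $\bcurl u\in\tilde{\boldsymbol{H}}_t^{-1/2}(\Gamma)$, so by~\eqref{cont_V} one has $V\bcurl u\in\boldsymbol{H}_t^{1/2}(\Gamma)$. Using the standard identity $\curlT(V\bcurl u)=Wu=f$ (in the paper's sign convention), the first term on the RHS of~\eqref{ipp2} becomes the $L^2$ pairing $\langle f,v_i\rangle_{\Gamma_i}$, controlled by $\|f\|_{L^2(\Gamma_i)}\|v_i\|_{L^2(\Gamma_i)}$. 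For the second term I would invoke the continuity $\bcurlS{\Gamma_i}\colon H^{1/2+s}(\Gamma_i)\to\boldsymbol{H}_t^{-1/2+s}(\Gamma_i)$ already used in the proof of Lemma~\ref{TILemma}, and pair the result against $V\bcurl u|_{\Gamma_i}\in\boldsymbol{H}_t^{1/2}(\Gamma_i)$ through the corresponding duality. Composing with a bounded right inverse of the trace operator then gives the estimate $\lesssim\|\varphi\|_{H^s(\gamma)}$.

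Independence of the lift is the more delicate point. Any two admissible lifts $v_i,\tilde v_i$ differ by some $w\in\tilde H^{1/2+s}(\Gamma_i)$, i.e.\ $w$ vanishes on all of $\partial\Gamma_i$. For smooth $w$ the classical surface integration-by-parts identity recalled before~\eqref{ipp2} collapses the RHS (applied to $w$) to the boundary term $\langle(V\bcurl u)\cdot\bt_i,w\rangle_{\partial\Gamma_i}$, which is zero. Density of $C_0^\infty(\Gamma_i)$ in $\tilde H^{1/2+s}(\Gamma_i)$, together with the continuity estimates just derived, extends this cancellation to arbitrary $w\in\tilde H^{1/2+s}(\Gamma_i)$, proving well-definedness.

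The step I expect to be the main obstacle is the endpoint case $s=1/2$: on subdomains $H^{1/2}$ and $\tilde H^{1/2}$ no longer coincide, a bounded lift from $H^{1/2}(\gamma)$ into $H^1(\Gamma_i)$ with prescribed vanishing on $\partial\Gamma\cap\partial\Gamma_i$ needs a careful construction, and the duality between $\bcurlS{\Gamma_i}v_i$ and $V\bcurl u|_{\Gamma_i}$ sits exactly on the edge of the range in which~\eqref{cont_V} is guaranteed. Away from this endpoint, i.e.\ for $s<1/2$, each of the steps above is routine, and the case $s=1/2$ would then be obtained either directly or by a density/limiting argument from $s<1/2$.
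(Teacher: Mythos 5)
Your strategy---read \eqref{ipp2} as a duality-based definition of $T_i u$, lift test functions on $\gamma$ into $H^{1/2+s}(\Gamma_i)$, show boundedness and lift-independence, and exploit $Wu=f\in L^2(\Gamma)$ to make both terms on the right-hand side pair sensibly---is the natural one, and it is essentially the argument behind this lemma (which the paper itself only cites from Healey--Heuer). The identification of $Wu\in L^2$ as what upgrades $\curlS{\Gamma_i}(V\bcurl u)$ from $H^{-1/2}$ to $L^2$ and thus makes the first term usable against lifts that do not vanish on $\gamma$ is exactly the right observation, and the duality pairing of $\bcurlS{\Gamma_i}v_i\in\boldsymbol{H}_t^{s-1/2}(\Gamma_i)$ against $V\bcurl u|_{\Gamma_i}\in\boldsymbol{H}_t^{1/2}(\Gamma_i)$ is the correct way to handle the second term.

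Two points deserve a correction, though neither overturns the argument. First, the duality convention: in this paper $H^{-s}(\gamma)$ is by definition the dual of $\tilde H^s(\gamma)$, not of $H^s(\gamma)$. You should therefore take your test functions $\varphi$ in $\tilde H^s(\gamma)$ rather than in $H^s(\gamma)$. For $s<1/2$ the two coincide, but at $s=1/2$ the distinction is decisive---and it works \emph{in your favour}: for $\varphi\in\tilde H^{1/2}(\gamma)$ the extension of $\varphi$ by zero onto $\partial\Gamma_i$ is bounded into $H^{1/2}(\partial\Gamma_i)$, after which a standard trace-lifting into $H^1(\Gamma_i)$ applies. So the ``main obstacle'' you flag at $s=1/2$ largely disappears once the correct test space is used; the obstruction you describe comes from aiming at the stronger (and unneeded) conclusion $T_i u\in\tilde H^{-1/2}(\gamma)$. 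Second, in the lift-independence step you invoke the classical surface integration-by-parts identity ``applied to $w$'', but that identity was stated for a smooth tangential field $\bvarphi$, and $\bvarphi=V\bcurl u$ is only in $\boldsymbol{H}_t^{1/2}(\Gamma)$. The clean justification is instead the distributional definition of $\curlS{\Gamma_i}$: for $w\in C_0^\infty(\Gamma_i)$ one has $\langle\curlS{\Gamma_i}(V\bcurl u),w\rangle_{\Gamma_i}=\langle V\bcurl u,\bcurlS{\Gamma_i}w\rangle_{\Gamma_i}$ by definition, so the two terms on the right-hand side of \eqref{ipp2} cancel directly; your density argument then finishes the job. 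With these two adjustments the proof is sound.
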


As a result, we can state:

\begin{lemma} \label{la_consistency}
Let $\nu > 0$ and $| \sigma | = 1$. Then, 
the Nitsche formulation is consistent, i.e.
the solution $u$ of \eqref{IE} ($Wu=f \in L^2(\Gamma)$) solves 
the discrete setting \eqref{nitsche-hypersingular},
\[
   A_\mathcal{T} (u,v_h) = \<f, v_h\>_\Gamma \qquad\forall v_h\in X_h.
\]
\end{lemma}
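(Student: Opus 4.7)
The plan is to exploit continuity of $u$ across $\gamma$ and the integration-by-parts formula \eqref{ipp2} to reduce $A_\mathcal{T}(u,v_h)$ to $\langle f,v_h\rangle_\Gamma$. First I would observe that since $u\in H^r(\Gamma)$ with $r>1/2$, the trace of $u$ is continuous across $\gamma$, so $[u]=0$; this kills the two terms in \eqref{nitsche-bilin} that contain $[u]$. (The pairing $\langle [u],T_1v_h-T_2v_h\rangle_\gamma$ is meaningful because Lemma~\ref{TILemma} places $T_iv_h$ in $L^2(\gamma)$.) The absence of a jump also yields $\bcurlT u = \bcurl u$ as distributions on $\Gamma$, hence $V\bcurlT u = V\bcurl u$. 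The target identity therefore reduces to
\[
\langle V\bcurl u,\bcurlT v_h\rangle_\mathcal{T} + \tfrac12\langle T_1u-T_2u,[v_h]\rangle_\gamma = \langle f,v_h\rangle_\Gamma.
\]

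Next, I would apply \eqref{ipp2} on each $\Gamma_i$ with $\bvarphi=V\bcurl u$ and test function $v_{h,i}$ (which vanishes on $\partial\Gamma\cap\partial\Gamma_i$), using Lemma~\ref{la_op} to place $T_iu\in H^{-1/2}(\gamma)$ so that the boundary duality is well defined. Summing over $i=1,2$ gives
\[
\langle V\bcurl u,\bcurlT v_h\rangle_\mathcal{T} = \sum_{i=1}^{2}\langle \curlS{\Gamma_i}(V\bcurl u),v_{h,i}\rangle_{\Gamma_i} - \sum_{i=1}^{2}\langle T_iu,v_{h,i}\rangle_\gamma.
\]
For the interior sum, I would invoke the Maue identity $\langle Wu,w\rangle_\Gamma = \langle V\bcurl u,\bcurl w\rangle_\Gamma$ with test functions $w\in C_0^\infty(\Gamma_i)$ extended by zero: this forces $\curlS{\Gamma_i}(V\bcurl u) = f|_{\Gamma_i}$ in $L^2(\Gamma_i)$, so that the interior sum equals $\langle f,v_h\rangle_\Gamma$.

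The boundary sum hinges on the antisymmetry $T_1u+T_2u=0$ on $\gamma$. Heuristically this is forced by the opposite orientation $\bt_1=-\bt_2$ along the interior edge, but to prove it rigorously I would sum \eqref{ipp2} for $i=1,2$ against an arbitrary $v\in\Hoo(\Gamma)$ (so $[v]=0$, $\bcurlT v=\bcurl v$) and apply Maue one more time, obtaining $\langle T_1u+T_2u,v\rangle_\gamma = \langle Wu,v\rangle_\Gamma - \langle V\bcurl u,\bcurl v\rangle_\Gamma = 0$ for all such $v$; density then gives $T_2u=-T_1u$ in $H^{-1/2}(\gamma)$. Plugging this in, the boundary sum collapses to $\langle T_1u,v_{h,1}-v_{h,2}\rangle_\gamma = \tfrac12\langle T_1u-T_2u,[v_h]\rangle_\gamma$, which cancels exactly the remaining interface contribution and yields the claim. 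The main obstacle I anticipate is precisely this low-regularity bookkeeping: $T_iu$ is only a distribution in $H^{-1/2}(\gamma)$, so both the piecewise integration by parts and the antisymmetry must be handled in the weak sense afforded by Lemma~\ref{la_op}. Once those identifications are in place, the proof is a transparent cancellation.
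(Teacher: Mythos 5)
Your proof is correct and follows the same overall strategy as the paper: observe $[u]=0$ to drop the $\sigma$- and $\nu$-terms, exploit $T_1u+T_2u=0$, apply the integration-by-parts formula \eqref{ipp2}, and finish with the Maue identity $Wu=\curlS{\Gamma}V\bcurlS{\Gamma}u$. The one genuine divergence is how you establish $T_1u+T_2u=0$. The paper gives a short, direct argument: $u\in\tilde H^s(\Gamma)$ for all $s<1$, so $V\bcurl u\in\boldsymbol{H}_t^s(\Gamma)$ with $s>1/2$, hence it has a single-valued trace on $\gamma$; combined with $\bt_1=-\bt_2$ this gives the cancellation pointwise. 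You instead derive it variationally — summing \eqref{ipp2} over $i$ against continuous test functions, invoking the global Maue identity to see the right-hand side vanish, then concluding by density that $T_1u+T_2u=0$ in $H^{-s}(\gamma)$. Your route is longer but stays entirely within the weak framework of Lemma~\ref{la_op}, which makes the low-regularity bookkeeping you flagged more transparent; the paper's route is shorter by observing that $u$'s extra regularity lets one sidestep the weak formulation at this step. Your intermediate localization $\curlS{\Gamma_i}(V\bcurl u)=f|_{\Gamma_i}$ (via $C_0^\infty(\Gamma_i)$ test functions) is also a slightly more explicit rendering of the paper's direct chain $\sum_i\<\curlS{\Gamma_i}(V\bcurlS{\Gamma}u),v_{h,i}\>_{\Gamma_i}=\<Wu,v_h\>_\Gamma$; both are correct. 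One cosmetic difference in ordering: the paper first uses the antisymmetry to rewrite $\tfrac12\<T_1u-T_2u,[v_h]\>_\gamma$ as $\sum_i\<T_iu,v_{h,i}\>_\gamma$ and then invokes \eqref{ipp2}, whereas you apply \eqref{ipp2} first and then cancel — algebraically equivalent.
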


\begin{proof}
Let $u \in \Hoo(\Gamma)$ be the solution of \eqref{IE}. It is well known that
$u\in \tilde H^s(\Gamma)$ for any $s<1$, see, e.g., \cite{stephan-86}, so that the trace
of $u$ on $\gamma$ is well defined and $[u]=0$.

By Lemma \ref{la_op}, $T_1 u$ and $T_2 u\in H^{-s}(\gamma)$ ($0 < s \leq 1/2$). Moreover,
since
$[ (V\bcurl u)|_{\Gamma_1}  - (V\bcurl u)|_{\Gamma_2} ] |_\gamma = 0$ and
$\bt_1 = -\bt_2$ on $\gamma$, there holds
\begin{equation}
\label{neumann-cond}
T_1 u + T_2 u = 0 \quad \mathrm{on}\ \gamma.
\end{equation}
We obtain for $v_h \in X_h$
\begin{align*}
\begin{aligned}
A_\mathcal{T} ( u , v_h ) = 
& \<V \bcurlT u , \bcurlT v_h \>_{\mathcal{T}}
& + \frac12 \<T_1 u - T_2 u , [v_h] \>_{\gamma}
& + \frac\sigma2 \< [u] ,
T_1 v_h - T_2 v_h \>_{\gamma}\\
& + \nu \< [u] , [v_h] \>_{\gamma}\\
= & \<V \bcurlS{\Gamma} u , \bcurlT v_h \>_{\mathcal{T}}
& + \frac12 \<T_1 u - T_2 u , [v_h] \>_{\gamma}.
\end{aligned}
\end{align*}
Using \eqref{neumann-cond} to write 
$T_i u = \frac12 T_i u - \frac12 T_j u$ ($i\neq j$), and rearranging 
terms, we obtain
\[
\frac12 \< T_1 u - T_2 u ,  [ v_h ] \>_{\gamma}
= \sum_{i=1}^2
 \<T_i u, v_{h,i}\>_{\gamma}
\]
so that, together with the integration-by-parts formula \eqref{ipp2},
\begin{align*}
\begin{aligned}
A_\mathcal{T} ( u , v_h ) 
= & \sum_{i=1}^2 \left [
\< V \bcurlS{\Gamma} u , \bcurlS{\Gamma_i} v_{h,i}  \>_{\Gamma_i} +
 \<T_i u, v_{h,i}\>_{\gamma} \right ]
 =  \sum_{i=1}^2
\< \curlS{\Gamma_i} (V \bcurlS{\Gamma} u) , v_{h,i}  \>_{\Gamma_i}\\
 = & 
\< \curlS{\Gamma} (V \bcurlS{\Gamma} u) , v_{h}  \>_{\Gamma}
 =
\< Wu , v_{h}  \>_{\Gamma}
 =
\< f , v_{h}  \>_{\Gamma},
\end{aligned}
\end{align*}
Here, we have also used the relation
\[
W u = \curlS{\Gamma} V \bcurlS{\Gamma} u, 
\]
see \cite{maue-49,nedelec-82}. 
This proves the lemma.
\end{proof}

\subsection{Discrete ellipticity and continuity}
\label{sub_ellipticity}

Main advantage of the Nitsche method is that it yields elliptic bilinear forms
in the case of elliptic problems. In the boundary element setting, we do not have
an appropriate variational formulation. Nevertheless, discrete ellipticity is
still achievable. This is contents of the first lemma. Afterwards, we briefly
state discrete continuity without giving a bound for the continuity constant.
This bound is studied in more detail in the proof of the main theorem in
\S\ref{sub_maintheorem}.

\begin{lemma} \label{la_ell}

{\rm(i)} Let $\sigma=-1$. For all $\nu > 0$ there exists a constant $C(\nu)>0$ such that
\[
   A_\mathcal{T} (v_h,v_h)
   \ge
   C(\nu)\, \nOHTs{v_h}^2
   \quad\forall v_h\in X_h
\]
and
\[
   A_\mathcal{T} (v_h,v_h)
   \ge
   C(\nu)\, \Bigl( \|\bcurlT v_h\|_{\tilde H^{-1/2}(\Gamma)} + \|[v_h]\|_{L^2(\gamma)}\Bigr)
            \nOHTs{v_h}
   \quad\forall v_h\in X_h.
\]
{\rm(ii)} Let $\sigma=1$. There exists a constant $C_1>0$ such that,
if  $\nu\ge C_1\,|\log \underline{h}|^{3}$, then there exists a constant
$C_2>0$ independent of $\nu$ such that
\[
   A_\mathcal{T} (v_h,v_h)
   \ge
   C_2\, \nOHTs{v_h}^2
   \quad\forall v_h\in X_h
\]
and
\[
   A_\mathcal{T} (v_h,v_h)
   \ge
   C_2\, \Bigl( \|\bcurlT v_h\|_{\tilde H^{-1/2}(\Gamma)} + \|[v_h]\|_{L^2(\gamma)}\Bigr)
            \nOHTs{v_h}
   \quad\forall v_h\in X_h.
\]
\end{lemma}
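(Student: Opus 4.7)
The first step is to observe that the four terms in $A_\mathcal{T}(v_h,v_h)$ collapse to
\[
A_\mathcal{T}(v_h,v_h) = \<V\bcurlT v_h, \bcurlT v_h\>_{\mathcal{T}}
+ \frac{1+\sigma}{2}\<T_1 v_h - T_2 v_h, [v_h]\>_{\gamma}
+ \nu\|[v_h]\|_{L^2(\gamma)}^2,
\]
so the case $\sigma=-1$ and the case $\sigma=+1$ differ only in the presence of the cross term. The leading term is always nonnegative because $V$ is a positive definite operator on $\tilde{\boldsymbol{H}}_t^{-1/2}(\Gamma)$; combining this with the mapping property of $\bcurlS{\Gamma_i}$ from \cite[Lemma 3.4]{healey-heuer-09} and the equivalence $|u|_{H^{1/2}(\Gamma_i)}\approx \|\bcurlS{\Gamma_i}u\|_{H^{-1/2}(\Gamma_i)}$ coming from the identity $W=\curl V\bcurl$ and $H^{1/2}$-ellipticity of $W$, I would establish the key inequality
\[
\<V\bcurlT v_h, \bcurlT v_h\>_{\mathcal{T}} \gtrsim \|\bcurlT v_h\|_{\tilde{\boldsymbol{H}}_t^{-1/2}(\Gamma)}^2 \gtrsim |v_h|_{H^{1/2}(\mathcal{T})}^2
\]
for all $v_h\in X_h$. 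The second ``$\gtrsim$'' is the delicate point, because $v_h|_{\Gamma_i}$ does not vanish on the interface $\gamma$ and therefore does not lie in $\tilde H^{1/2}(\Gamma_i)$; the argument has to pass through the (non-tilded) $H^{-1/2}(\Gamma_i)$-norm of the piecewise curl and exploit that the two pieces $(\bcurlS{\Gamma_i}v_{h,i})^0$ have essentially disjoint supports.

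With that inequality in hand, case (i) is immediate: when $\sigma=-1$ the cross term disappears and
\[
A_\mathcal{T}(v_h,v_h) \gtrsim |v_h|_{H^{1/2}(\mathcal{T})}^2 + \nu\|[v_h]\|_{L^2(\gamma)}^2 \gtrsim C(\nu)\,\nOHTs{v_h}^2.
\]
For case (ii), I would estimate the cross term using Cauchy--Schwarz on $\gamma$ and the discrete trace inverse bound \eqref{trace-inv3} of Lemma \ref{TILemma}:
\[
|\<T_1 v_h - T_2 v_h, [v_h]\>_{\gamma}|
\le \bigl(\nZg{T_1 v_h}+\nZg{T_2 v_h}\bigr)\|[v_h]\|_{L^2(\gamma)}
\lesssim |\log\underline{h}|^{3/2}\,|v_h|_{H^{1/2}(\mathcal{T})}\|[v_h]\|_{L^2(\gamma)}.
\]
Then Young's inequality with a parameter tuned to half of the ellipticity constant of the leading term gives
\[
A_\mathcal{T}(v_h,v_h) \ge \tfrac12 C\,|v_h|_{H^{1/2}(\mathcal{T})}^2 + \bigl(\nu - C'|\log\underline{h}|^3\bigr)\|[v_h]\|_{L^2(\gamma)}^2,
\]
so choosing $C_1$ large enough that $\nu\ge C_1|\log\underline{h}|^3$ forces the second bracket to remain at least $\nu/2$ and the first ellipticity estimate follows with a constant $C_2$ that is independent of $\nu$.

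For the second bound in each case, I would not redo ellipticity but simply note that by the continuity of $\bcurlT:H^{1/2}(\mathcal{T})\to \tilde{\boldsymbol{H}}_t^{-1/2}(\Gamma)$ (again via \cite[Lemma 3.4]{healey-heuer-09}) and by the very definition of $\|\cdot\|_{H^{1/2}_*(\mathcal{T})}$,
\[
\|\bcurlT v_h\|_{\tilde H^{-1/2}(\Gamma)} + \|[v_h]\|_{L^2(\gamma)} \lesssim \nOHTs{v_h}.
\]
Multiplying this by $\nOHTs{v_h}$ and using the first ellipticity estimate $A_\mathcal{T}(v_h,v_h)\gtrsim \nOHTs{v_h}^2$ yields the claimed lower bound
\[
A_\mathcal{T}(v_h,v_h)\gtrsim \bigl(\|\bcurlT v_h\|_{\tilde H^{-1/2}(\Gamma)}+\|[v_h]\|_{L^2(\gamma)}\bigr)\nOHTs{v_h}.
\]
The main technical obstacle is the first paragraph: rigorously controlling the broken seminorm $|v_h|_{H^{1/2}(\mathcal{T})}$ by the global quadratic form $\<V\bcurlT v_h,\bcurlT v_h\>_\mathcal{T}$ despite the non-conformity at $\gamma$, since one cannot simply invoke the global ellipticity of $W$ and the standard $\bcurl$-based norm equivalence that underlies it.
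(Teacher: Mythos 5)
Your treatment of the first ellipticity estimate in both cases is essentially the paper's argument. The chain
\[
\<V\bcurlT v_h,\bcurlT v_h\>_{\mathcal T}\gtrsim \|\bcurlT v_h\|^2_{\tilde{\boldsymbol H}_t^{-1/2}(\Gamma)}\gtrsim \snOHT{v_h}^2
\]
is exactly what the paper establishes (via ellipticity of $V$, a localization lemma from \cite{gatica-healey-heuer-09}, and the mapping property of $\bcurlS{\Gamma_i}$), and for $\sigma=1$ your use of \eqref{trace-inv3} plus Young's inequality is a clean repackaging of the paper's explicit optimization over $s$ and $\delta$: the paper re-derives \eqref{trace-inv3} inline, whereas you invoke it directly. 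Either way the threshold $\nu\gtrsim|\log\underline h|^3$ comes out the same.

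The gap is in the second estimate. You propose to prove it by multiplying the first ellipticity bound against an \emph{upper} bound
\[
\|\bcurlT v_h\|_{\tilde{\boldsymbol H}_t^{-1/2}(\Gamma)}+\|[v_h]\|_{L^2(\gamma)}\lesssim\nOHTs{v_h},
\]
claiming this follows from \cite[Lemma 3.4]{healey-heuer-09}. That lemma gives boundedness of $\bcurlS{\Gamma_i}:H^{1/2}(\Gamma_i)\to\boldsymbol H_t^{-1/2}(\Gamma_i)$, i.e.\ into the \emph{non-tilde} space. To reach $\tilde{\boldsymbol H}_t^{-1/2}(\Gamma)$ (the dual of $H^{1/2}(\Gamma)$) you would need the extension-by-zero of $\bcurlS{\Gamma_i}v_h$ to act boundedly on test functions in $H^{1/2}(\Gamma_i)$ that do \emph{not} vanish on $\partial\Gamma_i$; integrating by parts, this produces a boundary term $\int_{\partial\Gamma_i}v_h\,\phi\cdot\bt_i$ which is ill-defined for generic $v_h\in H^{1/2}(\Gamma_i)$. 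This is exactly the missing-trace pathology the paper is built around, so the inequality cannot hold with an $h$-independent constant; for discrete $v_h$ one only gets it with a logarithmic penalty, which would pollute the final estimate. The paper avoids this entirely: it observes that the same proof of ellipticity also yields the \emph{second lower bound}
\[
A_{\mathcal T}(v_h,v_h)\gtrsim \|\bcurlT v_h\|^2_{\tilde{\boldsymbol H}_t^{-1/2}(\Gamma)}+\|[v_h]\|^2_{L^2(\gamma)},
\]
and then takes the geometric mean of the two lower bounds: since $A\gtrsim a^2$ and $A\gtrsim b^2$ imply $A\gtrsim ab$, and $\sqrt{\|\bcurlT v_h\|^2+\|[v_h]\|^2}\gtrsim \|\bcurlT v_h\|+\|[v_h]\|$, the second assertion follows with no trace estimate, no continuity of $\bcurlT$ into the tilde space, and no extra logarithms. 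You should replace the upper-bound step with this two-sided lower bound and geometric-mean argument.
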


\begin{proof}
{\rm(i)} Case $\sigma=-1$.

Let $v_h \in X_h$. By the ellipticity of $V$ and \cite[Lemma 4.1]{gatica-healey-heuer-09}
there holds
\begin{align*}
\<V \bcurlT v_h , \bcurlT v_h \>_{\mathcal{T}}
&\gtrsim \|\bcurlT v_h\|_{\tilde{\boldsymbol{H}}_t^{-1/2} (\Gamma)}^2
\gtrsim \sum_{i=1}^2 \nmOHbtGi{\bcurlS{\Gamma_i} v_h}^2
\nonumber\\
&\gtrsim \sum_{i=1}^2 \snOHGi{v_h}^2
= \snOHT{v_h}^2.
\end{align*}
This proves that
\begin{align} \label{1}
   A_\mathcal{T} (v_h,v_h)
   &= 
   \<V \bcurlT v_h , \bcurlT v_h \>_{\mathcal{T}}
   + \nu \< [v_h] , [v_h] \>_{\gamma}
   \nonumber\\
   &\gtrsim
   \snOHT{v_h}^2
   +
   \|[v_h]\|_{L^2(\gamma)}^2
   =
   \nOHTs{v_h}^2
\end{align}
(which is the first assertion) and also
\[
   A_\mathcal{T} (v_h,v_h)
   \gtrsim
   \|\bcurlT v_h\|_{\tilde{\boldsymbol{H}}_t^{-1/2} (\Gamma)}^2
   +
   \|[v_h]\|_{L^2(\gamma)}^2.
\]
Both estimates together prove the second assertion.

{\rm(ii)} In the case $\sigma=1$ we obtain for $v_h \in X_h$
\[
A_\mathcal{T} (v_h, v_h) = 
\<V \bcurlT v_h , \bcurlT v_h \>_{\mathcal{T}}
+ \nu \< [v_h] , [v_h] \>_{\gamma}
+ \<T_1 v_h - T_2 v_h , [v_h] \>_{\gamma}.
\]
Using the Cauchy-Schwarz and Young's inequalities, and \eqref{trace-inv2},
we bound
\begin{align*}
   \<T_1 v_h - T_2 v_h , [v_h] \>_{\gamma}
   &\lesssim
   \delta \nZg{T_1 v_h - T_2 v_h}^2 + \frac1{\delta}  \nZg{[v_h]}^2
   \\
   &\lesssim
   \frac \delta{(s-1/2)^{3}} \snsT{v_h}^2
   +
   \frac1{\delta}  \nZg{[v_h]}^2
   \quad\forall\delta>0.
\end{align*}
Combination of these two relations with \eqref{1}, and making use
of the inverse property
$\snsG{v} \lesssim \underline{h}^{1/2-s} \snOHG{v}$
(see, e.g., \cite[Lemma 4]{heuer-01} together with a quotient space
argument), yields
\begin{align*}
   A_\mathcal{T} (v_h, v_h)
   &\gtrsim
   \Bigl(1- \delta C_1\frac{\underline{h}^{1-2s}}{(s-1/2)^3}\Bigr) \snOHT{v_h}^2
   + (\nu - \frac {C_2}\delta) \nZg{[ v_h ]}^2
   \quad\forall\delta>0
\end{align*}
for two unknown constants $C_1,C_2>0$.
Selecting
\[
   s=\frac 12(1 +|\log\underline{h}|^{-1}) \qquad\mbox{and}\qquad
   \delta = \frac {c e}{8 C_1} |\log \underline{h}|^{-3}\quad\mbox{for}\quad c\in (0,1)
\]
this yields
\[
   A_\mathcal{T} (v_h, v_h)
   \gtrsim
   \snOHT{v_h}^2
   +
   \nZg{[ v_h ]}^2
   =
   \nOHTs{v_h}^2
\]
for $\nu\ge\frac {C_2}\delta+c=\frac {8 C_1 C_2}{c e} |\log \underline{h}|^{3}+ c$.
This proves the first estimate in (ii).
As in the case $\sigma=-1$, and using \eqref{trace-inv1} in addition to \eqref{trace-inv2},
one proves the second assertion under the same condition on $\nu$.                                                                               
\end{proof}

\begin{lemma} \label{la_cont}
Let 
$\nu > 0$ and $| \sigma | = 1$. 
The bilinear form $A_\mathcal{T}$ is continuous: 
\[
   A_\mathcal{T} (v_h,w_h) \lesssim 
   C(\nu,\underline{h})
   \nOHTs{v_h} \nOHTs{w_h} 
   \quad\forall v_h,w_h \in X_h,
\]

with $C(\nu,\underline{h}) > 0$ a number that depends on $\nu$ and on the mesh parameter
$\underline{h}$.
\end{lemma}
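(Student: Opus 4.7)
The plan is to bound each of the four terms in the definition \eqref{nitsche-bilin} of $A_\mathcal{T}(v_h,w_h)$ separately against $\nOHTs{v_h}\nOHTs{w_h}$, tracking the dependence on $\nu$ and on $\underline{h}$. Three of the four bounds are essentially routine applications of Cauchy--Schwarz together with mapping properties already used in the proofs of Lemmas~\ref{TILemma} and \ref{la_ell}; the real work is in the two cross terms involving $T_i$, for which we must use the discreteness of $v_h,w_h$.

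For the single layer term, since $\bcurlT$ is defined as a sum of extensions by zero from the sub-domains, $\<V\bcurlT v_h,\bcurlT w_h\>_{\mathcal{T}}=\<V\bcurlT v_h,\bcurlT w_h\>_\Gamma$. I would estimate this by duality between $\boldsymbol{H}_t^{1/2}(\Gamma)$ and $\tilde{\boldsymbol{H}}_t^{-1/2}(\Gamma)$, use the continuity \eqref{cont_V} of $V$ with $s=1/2$, and then apply the boundedness of $\bcurlS{\Gamma_i}\colon H^{1/2}(\Gamma_i)\to\boldsymbol{H}_t^{-1/2}(\Gamma_i)$ exactly as in the proof of Lemma~\ref{la_ell}, arriving at $|\,\<V\bcurlT v_h,\bcurlT w_h\>_{\mathcal{T}}| \lesssim \snOHT{v_h}\snOHT{w_h} \le \nOHTs{v_h}\nOHTs{w_h}$. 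The penalty term is even simpler: Cauchy--Schwarz on $L^2(\gamma)$ gives $\nu\,|\<[v_h],[w_h]\>_\gamma|\le\nu\,\nOHTs{v_h}\nOHTs{w_h}$.

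The main obstacle is to bound the two coupling terms $\frac12\<T_1v_h-T_2v_h,[w_h]\>_\gamma$ and $\frac\sigma2\<[v_h],T_1w_h-T_2w_h\>_\gamma$, because $T_iv$ is not defined in a controllable way for $v\in\Hoo(\Gamma)$ in general. Here I would use Cauchy--Schwarz on $L^2(\gamma)$ together with the discrete inverse-type estimate \eqref{trace-inv3} from Lemma~\ref{TILemma}:
\[
   \nZg{T_i v_h} \lesssim |\log\underline{h}|^{3/2}\,\snOHT{v_h},\qquad v_h\in X_h,
\]
and analogously for $w_h$. This bounds each cross term by $|\log\underline{h}|^{3/2}\,\snOHT{v_h}\,\nZg{[w_h]}$ or its symmetric counterpart, and both are $\lesssim|\log\underline{h}|^{3/2}\,\nOHTs{v_h}\nOHTs{w_h}$.

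Summing the four contributions yields
\[
   A_\mathcal{T}(v_h,w_h) \lesssim \bigl(1+\nu+|\log\underline{h}|^{3/2}\bigr)\,\nOHTs{v_h}\,\nOHTs{w_h}
   \qquad\forall v_h,w_h\in X_h,
\]
so $C(\nu,\underline{h}) \lesssim \nu + |\log\underline{h}|^{3/2}$. The essential point is that the bilinear form is \emph{not} continuous on a continuous $H^{1/2}_*$-type space; continuity on $X_h$ is obtained only after paying the logarithmic inverse factor coming from \eqref{trace-inv3}, which is exactly the same mechanism that forced the condition $\nu\gtrsim|\log\underline{h}|^3$ in the ellipticity proof of Lemma~\ref{la_ell}.
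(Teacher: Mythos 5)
Your overall strategy --- term-by-term Cauchy--Schwarz, mapping properties of $V$ and $\bcurlT$, and the inverse-type trace bound \eqref{trace-inv3} for the coupling terms --- is exactly what the paper's one-line proof gestures at, and the bounds for the two coupling terms and the penalty term are correct. But your claimed bound for the single-layer term,
\[
   |\<V\bcurlT v_h,\bcurlT w_h\>_{\mathcal{T}}| \lesssim \snOHT{v_h}\,\snOHT{w_h},
\]
with a mesh-independent constant, does not hold, and the step you propose to reach it has a genuine gap. After applying duality and \eqref{cont_V} you are left with $\|\bcurlT v_h\|_{\tilde{\boldsymbol{H}}_t^{-1/2}(\Gamma)}\|\bcurlT w_h\|_{\tilde{\boldsymbol{H}}_t^{-1/2}(\Gamma)}$, and now you need to bound $\|\bcurlT v_h\|_{\tilde{\boldsymbol{H}}_t^{-1/2}(\Gamma)}$ by $\snOHT{v_h}$. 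The mapping property $\bcurlS{\Gamma_i}\colon H^{1/2}(\Gamma_i)\to\boldsymbol{H}_t^{-1/2}(\Gamma_i)$ only places $\bcurlS{\Gamma_i}v_h$ in $\boldsymbol{H}_t^{-1/2}(\Gamma_i)$, whereas what you need for the extension-by-zero norm on $\Gamma$ is the $\tilde{\boldsymbol{H}}_t^{-1/2}(\Gamma_i)$ norm. At the borderline index $-1/2$ these two norms are \emph{not} equivalent (this is the dual manifestation of $\tilde H^{1/2}(\Gamma_i)\subsetneq H^{1/2}(\Gamma_i)$), so the chain of inequalities you sketch cannot close with an $h$-independent constant. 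Note also that the ellipticity proof of Lemma~\ref{la_ell} uses these inequalities in the \emph{opposite} direction ($\gtrsim$), which is exactly the cheap direction; you cannot simply reverse them.

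This is precisely why the paper, in the proof of Theorem~\ref{thm_error}, handles $\|\bcurlT(\cdot)\|_{\tilde{\boldsymbol{H}}_t^{-1/2}(\Gamma)}$ via \eqref{pf_m2}: it moves to index $s-1/2$ for small $s>0$ (where the tilde/non-tilde norms are equivalent with constant $\sim 1/s$), applies the mapping property at level $s+1/2$, and then --- for discrete functions --- uses the inverse estimate to come back to level $1/2$, optimizing $s\sim|\log\underline h|^{-1}$ and paying a $|\log\underline h|$ factor. So the $V$-term in $A_\mathcal{T}$ also carries a logarithmic factor, from the same endpoint-Sobolev mechanism that already forces the $|\log\underline h|^{3/2}$ in \eqref{trace-inv3}; your closing remark attributes the mesh-dependence solely to the coupling terms, which is not accurate.

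The damage to your final conclusion is limited, because Lemma~\ref{la_cont} only asserts continuity with \emph{some} constant $C(\nu,\underline h)$, and the corrected estimate still reads $C(\nu,\underline h)\lesssim \nu+|\log\underline h|^{3/2}$ (the $|\log\underline h|$ from the $V$-term is absorbed). But as written, the step bounding the $V$-term is incorrect and needs to be replaced by the $s$-shift/inverse-estimate argument of \eqref{pf_m2}.
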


\begin{proof}
This estimate follows by 
using the mapping properties of the involved operators $V$, $\bcurlT$,
$T_i$ and inverse properties of discrete functions. 








\end{proof}

\subsection{Proof of the main theorem}
\label{sub_maintheorem}

By Lemma~\ref{la_ell}, and under the stated assumptions, the bilinear form $A_\mathcal{T}$
is elliptic. Moreover, by 
Lemma~\ref{la_cont},
this bilinear form is
also continuous on $X_h$ (with bound depending on the mesh) so that problem
(\ref{nitsche-hypersingular}) has a unique solution. It remains to bound the error.
To this end we follow the lines of a Strang estimate for non-conforming methods.
By Lemma~\ref{la_ell} there holds for any $v_h\in X_h$
\begin{align} \label{pf_m0}
   \nOHTs{u-u_h}
   &\le
   \nOHTs{u-v_h} + \nOHTs{u_h-v_h}
   \nonumber\\
   &\lesssim
   \nOHTs{u-v_h}
   +
   \sup_{w_h\in X_h\setminus\{0\}}
   \frac{A_\mathcal{T} (u_h-v_h,w_h)}
        {\|\bcurlT w_h\|_{\tilde{\boldsymbol{H}}_t^{-1/2}(\Gamma)} + \|[w_h]\|_{L^2(\gamma)}}.
\end{align}
Now, by the consistency (see Lemma~\ref{la_consistency}) we obtain
$A_\mathcal{T} (u_h-v_h,w_h) = A_\mathcal{T} (u-v_h,w_h)$ so that we continue
bounding (using duality estimates and the continuity of
$V$ 
\eqref{cont_V})
\begin{align} \label{pf_m1}
   A_\mathcal{T} (u_h-v_h,w_h)
   &=
   \<V \bcurlT (u-v_h), \bcurlT w_h \>_{\mathcal{T}}
   +
   \frac12 \<T_1 (u-v_h) - T_2 (u-v_h) , [w_h] \>_{\gamma}
   \nonumber\\
   &\quad+
   \frac\sigma2 \< [u-v_h], T_1 w_h - T_2 w_h \>_{\gamma}
   +
   \nu \< [u-v_h] , [w_h] \>_{\gamma}
   \nonumber\\
   &\lesssim
   \|\bcurlT (u-v_h)\|_{\tilde{\boldsymbol{H}}_t^{-1/2}(\Gamma)}
   \|\bcurlT w_h\|_{\tilde{\boldsymbol{H}}_t^{-1/2}(\Gamma)}
   \nonumber\\ 
   &\quad+
   \sum_{i=1}^2
   \Bigl(
   \|T_i(u-v_h)\|_{L^2(\gamma)} \|[w_h]\|_{L^2(\gamma)}
   +
   \|T_iw_h\|_{L^2(\gamma)} \|[u-v_h]\|_{L^2(\gamma)}
   \Bigr)
   \nonumber\\
   &\quad+
   \|[u-v_h]\|_{L^2(\gamma)} \|[w_h]\|_{L^2(\gamma)}.
\end{align}
We bound the terms on the right-hand side.

In the following let $s$ be a small positive number.
Using \cite[Lemma 5]{heuer-01} and the continuity of
$\bcurlS{\Gamma_i}:\; H^{s+1/2}(\Gamma_i)\to \boldsymbol{H}_t^{s-1/2}(\Gamma_i)$,
together with a quotient space argument, yields
\begin{align} \label{pf_m2}
   \|\bcurlT (u-v_h)\|_{\tilde{\boldsymbol{H}}_t^{-1/2}(\Gamma)}^2
   &\lesssim
   \sum_{i=1}^2
   \|\bcurlT (u-v_h)\|_{\tilde{\boldsymbol{H}}_t^{-1/2}(\Gamma_i)}^2
   \nonumber\\
   &\lesssim
   \frac 1{s^2}
   \sum_{i=1}^2
   \|\bcurlT (u-v_h)\|_{\boldsymbol{H}_t^{s-1/2}(\Gamma_i)}^2
   \lesssim
   \frac 1{s^2}
   \sum_{i=1}^2
   |u-v_h|_{H^{s+1/2}(\Gamma_i)}^2,
\end{align}
and estimate (\ref{trace-inv2}) proves
\begin{equation} \label{pf_m3}
   \|T_i(u-v_h)\|_{L^2(\gamma)}
   \lesssim
   \frac 1{s^{3/2}} |u-v_h|_{H^{s+1/2}(\mathcal{T})}.
\end{equation}
Eventually, by (\ref{trace-inv1}) and the inverse property
(see \cite[Lemma 4]{heuer-01}),
\begin{align} \label{pf_m4}
   \nZg{T_i w_h}^2
   &\lesssim
   \frac 1s \nsNbttildeG{\bcurlT w_h}{-1/2}^2
   \lesssim
   \frac 1{s\underline{h}^{2s}}
   \|\bcurlT w_h\|_{\tilde{\boldsymbol{H}}_t^{-1/2}(\Gamma)}^2.
\end{align}
Combination of (\ref{pf_m1})--(\ref{pf_m4}) proves that for any $w_h\in X_h\setminus\{0\}$\
and any small $s>0$ there holds
\[
   \frac{A_\mathcal{T} (u_h-v_h,w_h)}
        {\|\bcurlT w_h\|_{\tilde{\boldsymbol{H}}_t^{-1/2}(\Gamma)} + \|[w_h]\|_{L^2(\gamma)}}
   \lesssim
   \underline{h}^{-s} s^{-3/2}
   \Bigl(
      |u-v_h|_{H^{s+1/2}(\mathcal{T})}
      +
      \|[u-v_h]\|_{L^2(\gamma)}
   \Bigr).
\]
By a standard approximation result we have that, for $r \in (1/2;1)$,
\[
    \inf_{v_h\in X_h}
    \Bigl(|u-v_h|_{H^{s+1/2}(\mathcal{T})}
          +
          \|[u-v_h]\|_{L^2(\gamma)}
    \Bigr) \lesssim
    h^{r-s-1/2} \|u\|_{H^r(\Gamma)},
\]
so that referring to (\ref{pf_m0}) this proves that
\[
   \nOHTs{u-u_h}
   \lesssim
   \underline{h}^{-2s} s^{-3/2} h^{r-1/2} \|u\|_{H^r(\Gamma)}.
\]
Selecting $s=|\log \underline{h}|^{-1}$ this proves Theorem~\ref{thm_error}.

\section{Numerical results} \label{sec_num}
\setcounter{equation}{0}
\setcounter{figure}{0}
\setcounter{table}{0}

We consider the model problem (\ref{IE}) with $\Gamma=(0,1) \times (0,1)$
and $f=1$. For the sake of simplicity, we only deal with the case
of one sub-domain and in where the homogeneous Dirichlet condition
on the boundary $\gamma = \partial \Gamma$
(implicitly present in the energy space $\tilde H^{1/2}(\Gamma)$)
is imposed weakly (in the discrete case) through a Nitsche formulation.

This situation is identical to the one described
in \cite[Section V]{gatica-healey-heuer-09} where a Lagrangian multiplier
is used to impose the homogeneous boundary condition. 
Below we compare numerical results from both methods (Figure~\ref{fig:nl}). 

To obtain the Nitsche formulation of this problem,
we formally extend $u$ by $0$ onto $\R^2$ and decompose
$\R^2$ into $\Gamma_1$ = $\Gamma$ and $\Gamma_2 = \R^2 \setminus \Gamma$.
The extension of $u$ by $0$ is continuous in $H^{1/2}(\R^2)$ since
$u\in \tilde H^{1/2}(\Gamma)$.
As a result, the Nitsche formulation is a particular case
of the one studied in this paper, and the corresponding bilinear form
is obtained from \eqref{nitsche-bilin} by using that jumps across
$\gamma=\partial\Gamma$ are identical to traces on $\gamma$ (taking
the exact approximation $0$ of the solution exterior to $\Gamma$).

We use uniform meshes $\CT_h$ on $\Gamma$ which consist of squares of side-length $h$.
The discrete spaces $X_h$ are made of continuous piecewise bilinear polynomials
on $\CT_h$. Then the Nitsche-based formulation reads:
{\it Find $u_h \in X_h$ such that}
\begin{align}
\label{nitsche-numexp}
   \<V \bcurlG u_h, \bcurlG v_h\>_\Gamma
   + \<Tu_h,v_h\>_{\gamma}
   + \sigma \<u_h,Tv_h\>_{\gamma}
   + \nu\<u_h,v_h\>_{\gamma}
   =
   \<f, v_h\>_\Gamma
\end{align}
for all $v_h \in X_h$. Here, the operator $T$ is defined by
\[
   Tv := \bt\cdot V\bcurlG v|_\gamma
\]
with $\bt$ being the tangential unit vector along $\gamma$.
Since the exact solution $u$ of \eqref{IE} is unknown, the 
error
\[
   \|u-u_h\|_{H^{1/2}_{*} (\Gamma)}^2
   =
   |u-u_h|_{H^{1/2} (\Gamma)}^2
   +
   \|u_h\|_{L^2(\gamma)}^2
\]
cannot be computed directly (note that $u=0$ on $\gamma$).
Instead, we approximate an upper bound to the semi-norm 
$|u-u_h|_{H^{1/2}(\Gamma)}$ as follows.

First, note that there holds
\[
|u-u_h|_{H^{1/2}(\Gamma)}^2 \lesssim
\<V \bcurlG (u - u_h), \bcurlG (u - u_h) \>_\Gamma,
\]
due to ellipticity of $V$. Taking into account that $u$ is solution of \eqref{IE} and
$u_h$ is solution of \eqref{nitsche-numexp}, we find:
\begin{align*}
|u-u_h|_{H^{1/2}(\Gamma)}^2 &\lesssim &&
\<V \bcurlG u, \bcurlG u \>_\Gamma + \<V \bcurlG u_h, \bcurlG u_h \>_\Gamma
- 2 \<V \bcurlG u, \bcurlG u_h \>_\Gamma\\
&= &&
\<Wu,u\>_\Gamma +  \<f, u_h\>_\Gamma  - \<Tu_h,u_h\>_{\gamma} - \sigma \<u_h,Tu_h\>_{\gamma}\\
& &&  - \nu\<u_h,u_h\>_{\gamma}
   - 2 \<Wu,u_h\>_\Gamma + 2 \<Tu,u_h\>_\gamma\\
&\lesssim && 
\<Wu,u\>_\Gamma - \<f, u_h\>_\Gamma  -(1+\sigma) \<Tu_h,u_h\>_{\gamma}
- \nu\<u_h,u_h\>_{\gamma} + 2 \<Tu,u_h\>_\gamma.
\end{align*}
Then, from $\<u_h,u_h\>_{\gamma} \geq 0$, the Cauchy-Schwarz inequality and
Lemma~\ref{TILemma} (inequality \eqref{trace-inv3}), we obtain
\begin{align*}
|u-u_h|_{H^{1/2}(\Gamma)}^2 &\lesssim
\<Wu,u\>_\Gamma - \<f, u_h\>_\Gamma  +  \| Tu_h \|_{L^2 (\gamma)} \| u_h \|_{L^2 (\gamma)}
+  \| Tu \|_{L^2 (\gamma)} \| u_h \|_{L^2 (\gamma)}\\
&\lesssim
  \<Wu,u\>_\Gamma - \<f, u_h\>_\Gamma
  + \left ( | \log h |^{3/2} |u_h|_{H^{1/2}(\Gamma)} 
  +  \| Tu \|_{L^2 (\gamma)} \right ) \| u_h \|_{L^2 (\gamma)}.
\end{align*}
Note that for this specific problem, $Tu\in L^2(\gamma)$.
Furthermore, since the method is stable, $|u_h|_{H^{1/2}(\Gamma)}$ is bounded
independently of $h$.
This proves that
\begin{align*}
|u-u_h|_{H^{1/2}(\Gamma)} &\lesssim
| \<Wu,u\>_\Gamma - \<f, u_h\>_\Gamma |^{1/2} + | \log h |^{3/4} \| u_h \|^{1/2}_{L^2 (\gamma)}.
\end{align*}
Moreover, since $\|u_h\|_{L^2(\gamma)}\lesssim \| u_h \|^{1/2}_{L^2 (\gamma)}$,
there also holds
\begin{align*}
\|u-u_h\|_{H^{1/2}_*(\Gamma)} &\lesssim
| \<Wu,u\>_\Gamma - \<f, u_h\>_\Gamma |^{1/2} + | \log h |^{3/4} \| u_h \|^{1/2}_{L^2 (\gamma)}.
\end{align*}
The terms $\<f, u_h\>_\Gamma$ and $\| u_h \|_{L^2 (\gamma)}$ are easy to compute.
The energy norm $\<Wu,u\>_\Gamma^{1/2}$ of $u$ can be approximated through extrapolation,
denoted by $\|u\|_{\mathrm ex}$ in the following, see \cite{ervin-heuer-stephan-93}.
Therefore,
\begin{equation*}
   \Bigl(\bigl|\|u\|_{\rm ex}^2 - \<f,u_h\>_\Gamma\bigr|^{1/2}
         + \, | \log h |^{3/4} \|u_h\|^{1/2}_{L^2(\gamma)}
   \Bigr)/\|u\|_{\rm ex}
\end{equation*}
is a computable and reasonable measure for an upper bound of
the error
$\|u-u_h\|_{H^{1/2}_* (\Gamma)}$ normalized by
$\|u\|_{\tilde H^{1/2}(\Gamma)}$.
Below we present numerical results for the two contributions
\begin{equation} \label{e1}
   \bigl|\|u\|_{\rm ex}^2 - \<f,u_h\>_\Gamma\bigr|^{1/2}/\|u\|_{\rm ex}
\end{equation}
(referred to as ``$H^{1/2}$'' error in the figures) and
\begin{equation} \label{e2}
   \|u_h\|^{1/2}_{L^2(\gamma)}/\|u\|_{\rm ex}
\end{equation}
(referred to as ``$L^2$'' error).

We first consider some tests in the skew-symmetric case ($\sigma = -1$), for different
values of $\nu$. The corresponding results are given in Figures~\ref{fig:asym1}
and \ref{fig:asym2}. A double logarithmic scale is chosen and the errors \eqref{e1} are
plotted versus the dimension of the discrete space $X_h$. Figure \ref{fig:asym1} presents
results for the term \eqref{e1} of the error and indicates 
that the Nitsche-based method converges for all the tested values of $\nu$,
with a logarithmic perturbation of the convergence, as expected by the theory
(Theorem \ref{thm_error}). As a consequence, the convergence is asymptotically a bit
slower (by a factor of $|\log h|$) than in the case of the conforming BEM. The
latter method converges like $O(h^{1/2})$, and for comparison we have given
the curve $|\log h| h^{1/2}$ as well (with a constant factor for adjustment).
For all studied values of $\nu$ the curves 
exhibit the same asymptotic convergence order,
though their initial behavior differ. 
In particular, for $\nu \geq 2$, a minimum is reached quickly, after which the asymptotic
behavior is recovered. Apparently, for any particular mesh, there simply is an optimal value 
of $\nu$ for term \eqref{e1}.

Figure \ref{fig:asym2} shows that the other part of the error (given
by \eqref{e2}) also behaves as predicted. All the curves are parallel and
the parameter $\nu$ does not seem to have a great influence, except for
shifting the curves which corresponds to multiplication of the error by a constant. 

\begin{figure}[htb]
\centering
\includegraphics[width=0.7\textwidth]{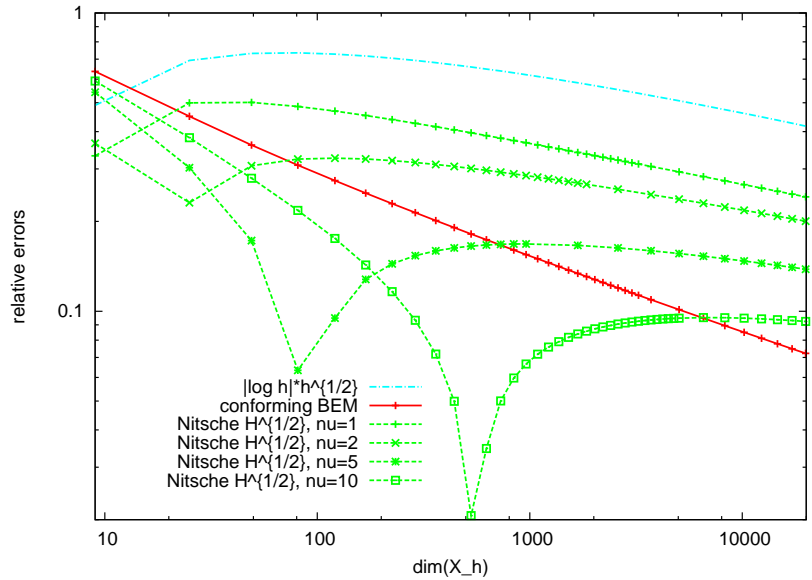} 
\caption{Skew-symmetric Nitsche method ($\sigma = -1$): relative error curves (upper bound 
\eqref{e1}). Comparison with conforming BEM.}
\label{fig:asym1}
\end{figure}
 
\begin{figure}[htb]
\centering
\includegraphics[width=0.7\textwidth]{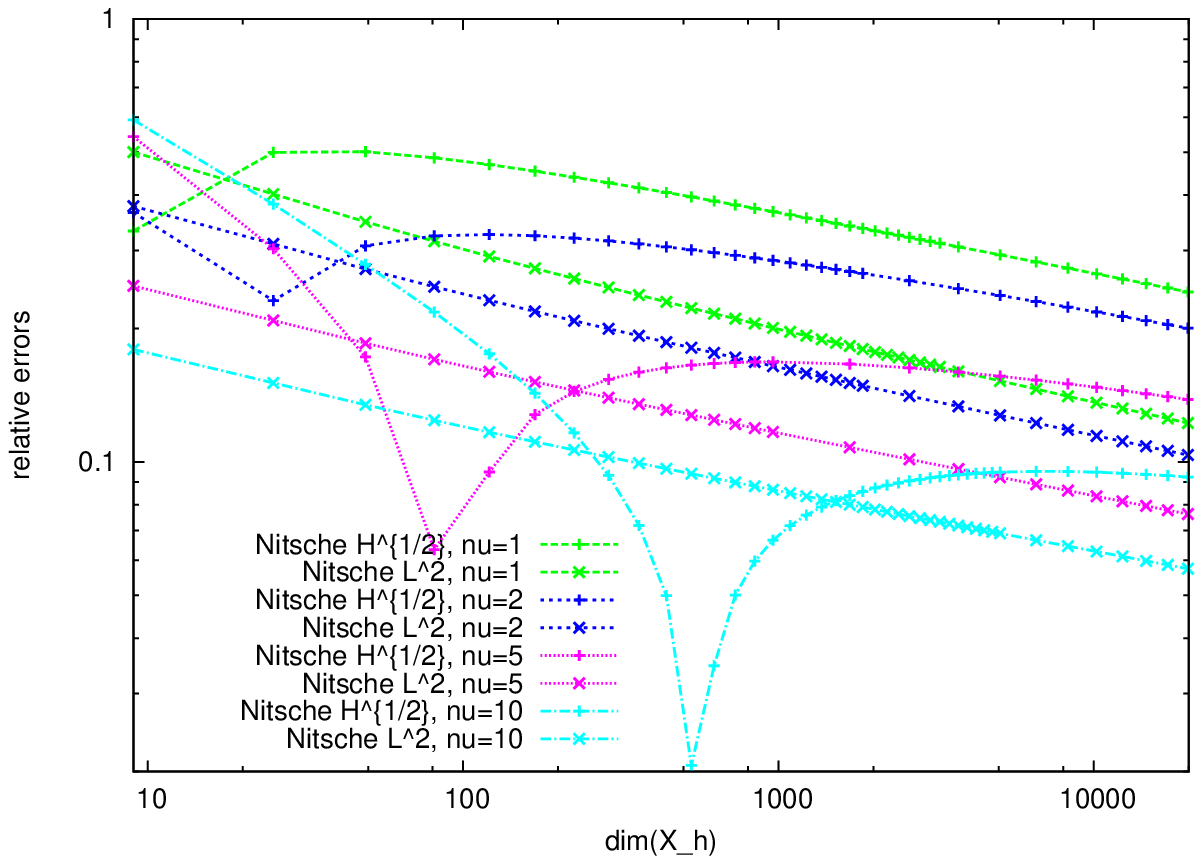} 
\caption{Skew-symmetric Nitsche method ($\sigma = -1$): relative error curves
(upper bounds \eqref{e1} and \eqref{e2}).}
\label{fig:asym2}
\end{figure}

Next we study the symmetric case ($\sigma = 1$). The corresponding results are
given in Figures~\ref{fig:sym1} and \ref{fig:sym2}. As expected,
if the value of $\nu$ is not sufficiently large, the method does not converge
(see the curve for $\nu = 1$ in Figures \ref{fig:sym1} and \ref{fig:sym2}). 
Indeed, if $\nu$ is too small, discrete ellipticity
of $A_\mathcal{T} (\cdot,\cdot)$ cannot be guaranteed (see Lemma \ref{la_ell}).
Taking higher values of $\nu$ ensures convergence of the method.
In particular, if $\nu$ is not taken as a constant but a power of
$| \log h |$ the asymptotic behavior improves.
For $\nu = | \log h |^2$, the behavior of the conforming BEM method is recovered, with
quasi-optimal convergence. Note that theoretically, a sufficient condition to
guarantee discrete ellipticity and convergence (in the symmetric case) is
$\nu \gtrsim | \log h |^3$ (cf. Lemma \ref{la_ell} and Theorem \ref{thm_error}).
The same conclusions hold when one looks at the $L^2 (\gamma)$-error
\eqref{e2} in Figure \ref{fig:sym2}.

\begin{figure}[htb]
\centering
\includegraphics[width=0.7\textwidth]{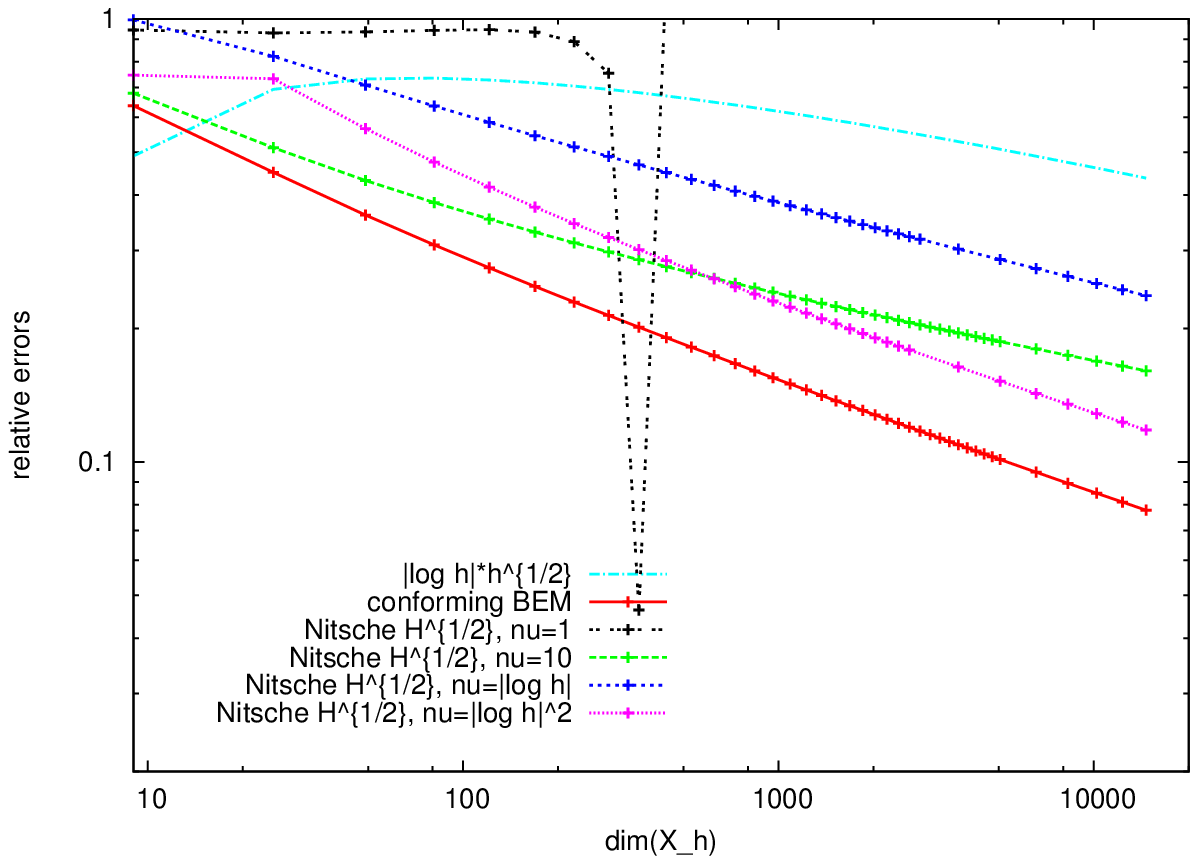} 
\caption{Symmetric Nitsche method ($\sigma = 1$): relative error curves (upper bound
\eqref{e1}). Comparison with conforming BEM.}
\label{fig:sym1}
\end{figure}

\begin{figure}[htb]
\centering
\includegraphics[width=0.7\textwidth]{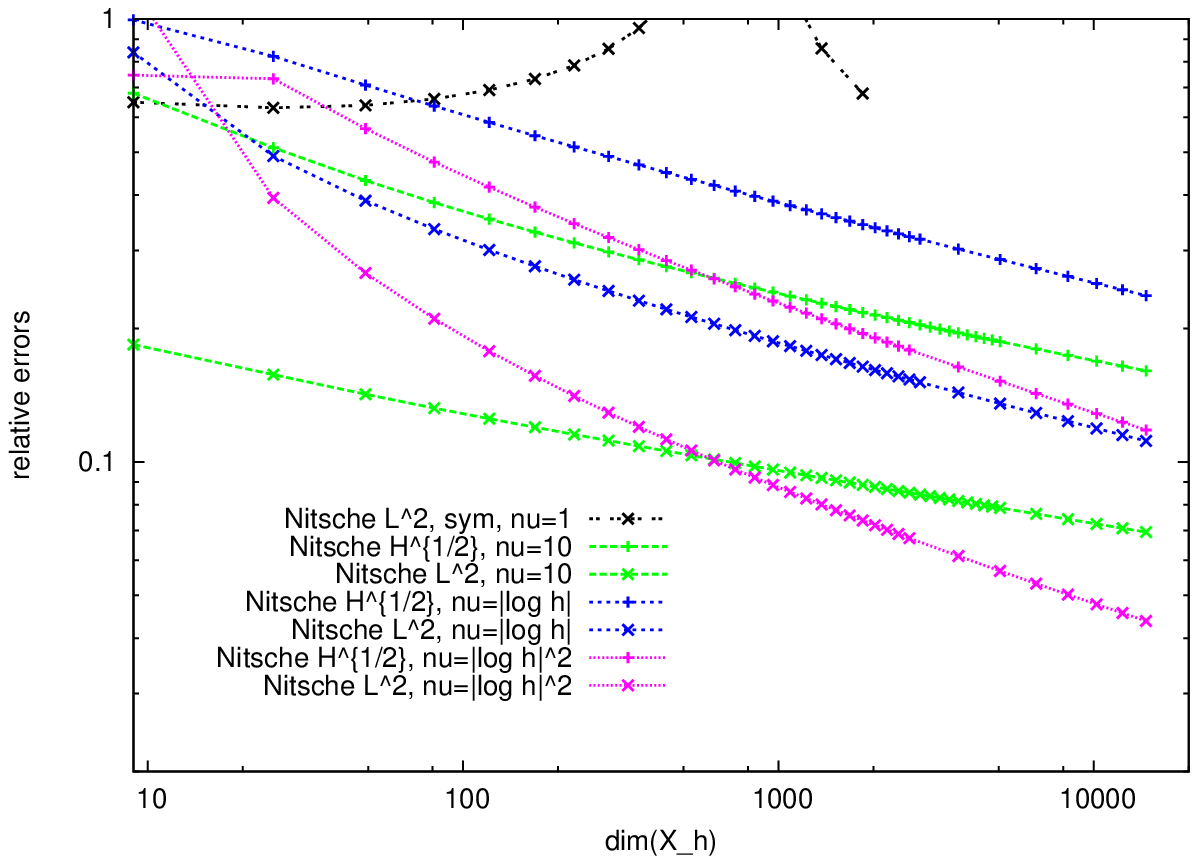} 
\caption{Symmetric Nitsche method ($\sigma = -1$): relative error curves
(upper bounds \eqref{e1}, except for $\nu=1$, and \eqref{e2}).}
\label{fig:sym2}
\end{figure}

In Figure \ref{fig:nl} we compare the Nitsche method (symmetric and skew-symmetric
versions) with the Lagrangian multiplier-based method \cite{healey-heuer-09}.
One observes that the symmetric Nitsche method and the Lagrangian multiplier method
have the same asymptotic convergence, which is quasi-optimal (without logarithmic
perturbation) in this example. The skew-symmetric method, on the other hand,
remains almost quasi-optimal, i.e. with logarithmic perturbation.

\begin{figure}[htb]
\centering
\includegraphics[width=0.7\textwidth]{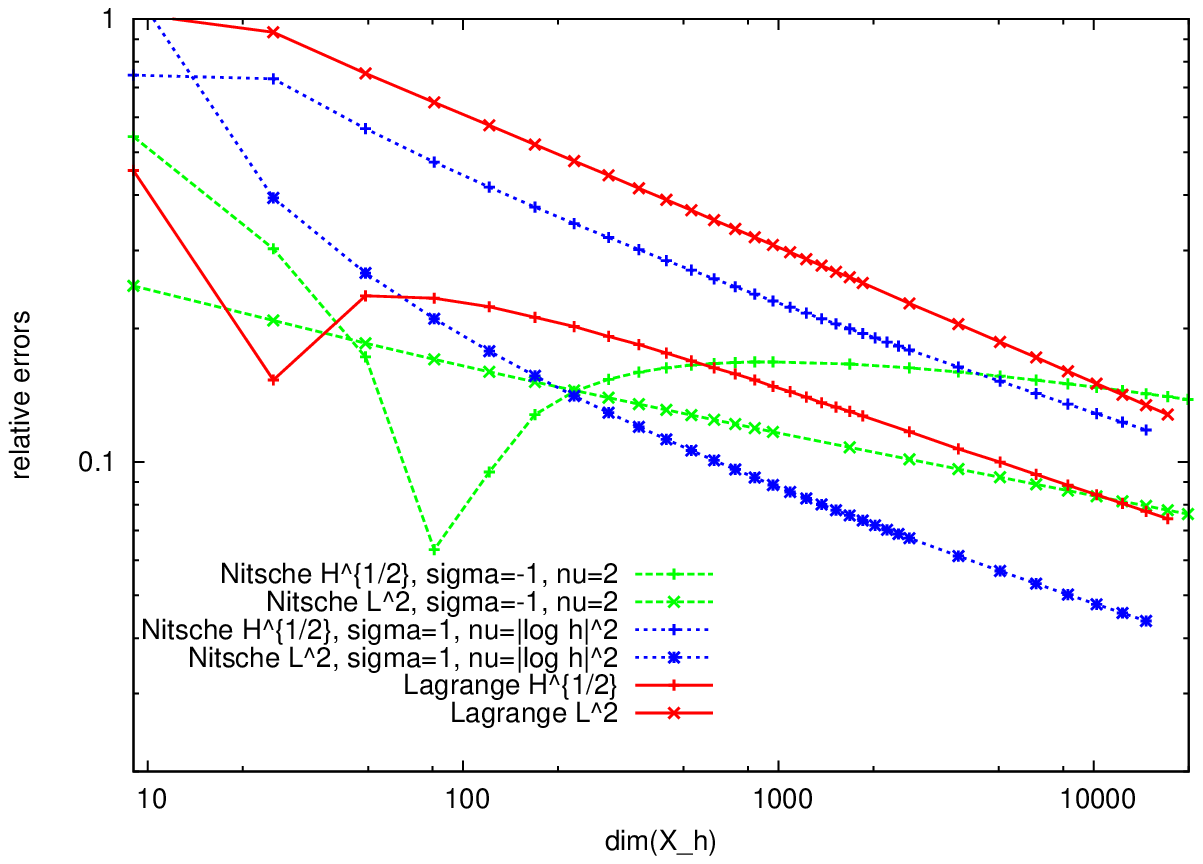} 
\caption{Comparing Nitsche and Lagrangian multiplier methods:
relative error curves (upper bounds \eqref{e1} and \eqref{e2} for all cases).}
\label{fig:nl}
\end{figure}

Concluding, the numerical experiments are in good agreement with the theory,
and illustrate the applicability of the Nitsche-based domain decomposition
method for hypersingular integral equations, e.g. as a possible alternative to a
Lagrangian multiplier approach which requires an additional unknown and
destroys ellipticity.
In particular, the symmetric case seems to be more appealing due to its competitive
convergence for large values of $\nu$, and since it maintains symmetry.

\clearpage
{\bf Acknowledgments.}
Part of this work has been carried out during research stays of Franz Chouly
at the Universidad T\'ecnica Federico Santa Mar\'\i a, Valpara\'\i so, and
the Pontificia Universidad Cat\'olica de Chile, Santiago.



\end{document}